\DeclareFontFamily{U}{mathx}{\hyphenchar\font45}
\DeclareFontShape{U}{mathx}{m}{n}{
      <5> <6> <7> <8> <9> <10> gen * mathx
      <10.95> mathx10 <12> <14.4> <17.28> <20.74> <24.88> mathx12
      }{}
\DeclareSymbolFont{mathx}{U}{mathx}{m}{n}
\DeclareMathSymbol{\intop}  {1}{mathx}{"B3}
\newcommand{\wh}{\widehat}
\let\temp\phi
\let\phi\varphi
\let\varphi\temp
\newcommand{\T}{\mathbb{T}}
\newcommand{\E}{\mathbb{E}}
\newcommand{\normalN}{\mathcal{N}}
\newcommand{\given}{\,|\,}  %
\renewcommand{\norm}[1]{\Vert#1\Vert} %
\newcommand{\eps}{\varepsilon}
\newcommand{\ges}{\gtrsim}
\newcommand{\maxop}{\vee}
\DeclareMathOperator{\tr}{tr}
\DeclareMathOperator{\supp}{supp}
\DeclareMathOperator{\cov}{cov}
\DeclareMathOperator{\var}{var}
\DeclareMathOperator*{\argmin}{arg\,min}
\DeclareMathOperator{\UniformDist}{Unif}
\DeclareMathOperator{\Rad}{Rad}
\DeclareMathOperator{\pa}{pa} %
\DeclareMathOperator{\an}{an} %
\DeclareMathOperator{\nd}{nd} %
\DeclareMathOperator{\de}{de} %
\DeclareMathOperator{\nb}{ne} %
\newcommand{\AND}{\text{ and }}
\newtheorem{theorem}{Theorem}[section]
\newtheorem{lemma}[theorem]{Lemma}
\newtheorem{corollary}[theorem]{Corollary}
\newtheorem{proposition}[theorem]{Proposition}
\newtheorem*{theorem*}{Theorem}
\newtheorem*{lemma*}{Lemma}
\newtheorem*{corollary*}{Corollary}
\newtheorem*{proposition*}{Proposition}
\newtheorem*{conjecture*}{Conjecture}
\theoremstyle{definition}
\newtheorem*{definition*}{Definition}
\theoremstyle{definition}
\newtheorem{example}{Example}
\theoremstyle{definition}
\newtheorem*{example*}{Example}
\theoremstyle{definition}
\theoremstyle{definition}
\newtheorem*{assumption*}{Assumption}
\theoremstyle{definition}
\newtheorem{condition}{Condition}
\theoremstyle{remark}
\newtheorem{remark}{Remark}
\theoremstyle{remark}
\newtheorem*{remark*}{Remark}
\renewcommand{\T}{^\top}
\newcommand{\gr}{G}
\newcommand{\truepr}{P}
\newcommand{\anc}{A}
\newcommand{\ggmcomplexity}{s\log d}
\title{Optimal estimation of Gaussian DAG models}
\author[]{Ming Gao}
\author[]{Wai Ming Tai}
\author[]{Bryon Aragam}
\affil[]{\emph{University of Chicago}}
\begin{document}

\maketitle

{\let\thefootnote\relax\footnote{Contact: \texttt{\{minggao,waiming.tai,bryon\}@chicagobooth.edu}}}

\begin{abstract}%
  We study the optimal sample complexity of learning a Gaussian directed acyclic graph (DAG) from observational data. 
  Our main results establish the minimax optimal sample complexity for learning the structure of a linear Gaussian DAG model in two settings of interest: 1) Under equal variances without knowledge of the true ordering, and 2) For general linear models given knowledge of the ordering.
  In both cases the sample complexity is $n\asymp q\log(d/q)$, where $q$ is the maximum number of parents and $d$ is the number of nodes. 
  We further make comparisons with the classical problem of learning (undirected) Gaussian graphical models, showing that under the equal variance assumption, these two problems share the same optimal sample complexity. In other words, at least for Gaussian models with equal error variances, learning a directed graphical model is statistically no more difficult than learning an undirected graphical model.
  Our results also extend to more general identification assumptions as well as subgaussian errors.
\end{abstract}

\section{Introduction}\label{sec:intro}

A significant open question in the literature on structure learning is the optimal sample complexity of learning a directed acyclic graphical model. The problem of deriving upper bounds on the sample complexity for this problem goes back decades \citep{zuk2006number,friedman1996}, and in recent years there has been significant progress \citep{ghoshal2017ident,ghoshal2017sem,chen2018causal,park2017,park2018learning,park2019poisson,park2020condvar,wang2018nongauss,gao2020npvar,gao2021efficient}. Nonetheless, despite these upper bounds, a tight characterization of the optimal sample complexity is missing. This is to be contrasted with the situation for learning undirected graphs (UGs), also known as Markov random fields (MRFs), for which optimal rates were established approximately ten years ago \citep{santhanam2012information,wang2010information}, alongside similar results for support recovery in linear models \citep{wainwright2009,wainwright2009information}. In fact, this is unsurprising given the connection between these two problems via neighbourhood regression. Unfortunately, learning a directed acyclic graph (DAG) does not reduce to neighbourhood regression as it involves a more difficult order recovery step.

In this paper, we resolve this question for the special case of linear Gaussian DAG models with equal error variances. The identifiability of these models was established in \cite{peters2013}, and eventually led to the development of several polynomial-time algorithms under the equal variance assumption \citep{ghoshal2017ident,ghoshal2017sem,chen2018causal,gao2020npvar}. Nonetheless, it was not known whether or not any of these algorithms were optimal for this precise statistical setting.
We will show that a variant of the EQVAR algorithm 
from \cite{chen2018causal} is indeed optimal. This involves the derivation of new lower bounds
and a novel analysis of the EQVAR algorithm that sharpens the existing sample complexity upper bound from $O(q^{2}\log d)$ to $O(q\log(d/q))$, where $q$ is the maximum number of parents in the DAG and $d$ is the number of nodes. This upper bound is optimal up to constants, and allows for the high-dimensional regime with $d\gg n$, where as usual $n$ denotes the sample size. 
Moreover, in Section~\ref{sec:lb:given_ordering}, we extend this result to the case of general Gaussian models with known ordering.
Our results also extend to more general identification assumptions (e.g. allowing for unequal error variances) as well as subgaussian error terms; see Remark~\ref{rem:gen}.

As a problem of independent interest, we further compare the complexity of learning Gaussian graphical models (GGMs) and Gaussian DAG models under the equal variance assumption. Given the additional complexity of the order recovery problem in DAG learning, the folklore has generally been that learning DAGs is harder than learning UGs. Despite this folklore, few results are available to rigorously characterize the hardness of these problems on an equal footing (besides known NP-hardness results for both problems, see \citealp{srebro2003maximum,chickering1996,chickering2004}). The equal variance assumption gives us the opportunity to make an apples-to-apples comparison under the same assumptions. As we will show, the optimal sample complexity for \emph{both problems} scales as $O(q\log(d/q))$. In other words, learning a DAG is \emph{statistically} no harder than learning a GGM under the equal variance assumption.
It is worth emphasizing that this comparison is purely statistical: The \emph{computational} complexity of the algorithm we analyze is exponential in $q$ whereas learning GGMs can be done efficiently; see also Remark~\ref{rem:lasso}.

To the best of our knowledge, these are the first results giving a tight characterization of the optimal sample complexity for learning DAG models from observational data. 

The rest of this paper is organized as follows: In the remainder of Section~\ref{sec:intro}, we discuss related work and the problem setting. In Sections~\ref{sec:ub} and~\ref{sec:lb} we present our main results for learning equal variance DAGs. Then in Section~\ref{sec:lb:given_ordering} we consider the special case of known ordering, and in Section~\ref{sec:ug} make further comparisons with learning undirected GGMs. An illustrative simulation study is presented in Section~\ref{sec:expt} before concluding with some open questions in Section~\ref{sec:conc}.

\paragraph{Notation and preliminaries}
Given a directed graph $G=(V,E)$ with $|V|=d$ nodes, we make the following standard definitions: 
\begin{itemize}
\item The parents $\pa(k)=\{\ell:(\ell,k)\in E\}$;
\item The descendants $\de(k)$ to which $k$ has at least one directed path;
\item The nondescendents $\nd(k) = V\setminus \de(k)$;
\item The ancestors $\an(k)$ any of which has at least one directed path to $k$.
\end{itemize}
When $(\ell,k)\in E$ we will often write $\ell\to k$ for short.
A source node is any $k\in V$ with $\an(k)=\emptyset$. A subgraph $G[V\setminus S]$ is the original graph with nodes in $S$ and edges related to $S$ removed. Given a DAG $G$, the moralized graph $\mathcal{M}(G)$ is constructed by dropping the orientations of all directed edges and then connecting all nodes within $\pa(k)$ for all $k\in [d]$. 
A \emph{topological sort} (also called an \emph{ordering}) of a DAG $G$ is an ordering of the nodes $V$ such that $j\to k\implies j\prec k$.

Given a random vector $X=(X_{1},\ldots, X_{d})$, we say that $G$ is a Bayesian network for $X$ (or more precisely, its joint distribution $P$), if the following factorization holds:
\begin{align}
\label{eq:defn:bn}
    P(X)
    = \prod_{k=1}^{d}P(X_{k}\given \pa(k)).
\end{align}
In this case, we abuse notation by identifying the random vector $X$ with the vertex set $V$, i.e. $V=X=[d]=\{1,2,\ldots,d\}$.
We denote the class of all DAGs with $d$ nodes and at most $q$ parents per node (i.e. in-degree $\le q$) by $\mathcal{\gr}_{d,q}$. 

\subsection{Related work}
To provide context, we begin by reviewing the related problem of learning the structure of an undirected graph (e.g. MRF, GGM, etc.) from data. Early work establishing consistency and rates of convergence includes \citet{meinshausen2006,banerjee2008,ravikumar2010}, with information-theoretic lower bounds following in \citet{santhanam2012information,wang2010information}. More recently, sample optimal and computationally efficient algorithms have been proposed \citep{vuffray2016interaction,misra2020information}. Part of the reason for the early success of MRFs is owed to the identifiability and convexity of the underlying problems. By contrast, DAG learning is notably \emph{nonidentifiable} and \emph{nonconvex}. This has led to a line of work to better understand identifiability \citep[e.g.][]{hoyer2009,zhang2009,peters2014,peters2013,park2017} as well as efficient algorithms that circumvent the nonconvexity of the score-based problem \citep{ghoshal2017ident,ghoshal2017sem,chen2018causal,gao2020npvar,gao2021efficient}. The latter class of algorithms begins by finding a topological sort of the DAG; once this is known the problem reduces to a variable selection problem. Our paper builds upon this line of work. 

Other approaches include score-based learning, for which various consistency results are known \citep{geer2013,buhlmann2014,loh2014causal,aragam2015highdimdag,nowzohour2016,nandy2018,rothenhausler2018causal,aragam2019globally}, but for which optimality results are missing. It is interesting to note that recent work has explicitly connected the equal variance assumption we use here to score-based learning via a greedy search algorithm \citep{rajendran2021structure}.
We also note here important early work on the constraint-based PC algorithm, which also establishes finite-sample rates under the strong faithfulness assumption \citep{kalisch2007}.

For completeness, we pause for a more detailed comparison with existing sample complexity upper bounds from the literature. \citet{geer2013} studied the $\ell_{0}$-penalized MLE and showed that $n/\log n\ges d$ samples suffice, which was later improved to $n\ges s\log d$ \citep{aragam2019globally}. 
Using a different approach, \citet{ghoshal2017ident} proved that $n\ges s^4\log d$ samples suffice, where $s$ is the maximum Markov blanket size or equivalently the size of the largest neighbourhood in the conditional independence graph of $X$.
The dependency on $s$ arises from the way this algorithm uses the inverse covariance matrix $\Gamma=\Sigma^{-1}$. 
Moreover, their result additionally requires the restricted strong adjacency faithfulness assumption, which we do not impose. 
In a more recent work, \citet{chen2018causal} show that $n\ges q^{2}\log d$ samples suffices to learn the ordering of the underlying DAG, but do not establish results for learning the full DAG. Similar to our work, \citet{chen2018causal} do not make any faithfulness or restricted faithfulness-type assumptions. We note also the work of \citet{park2020condvar} that establishes rates of convergence assuming $n>d$, but which precludes the high-dimensional scenario $d\gg n$. For comparison, we improve these existing bounds to $n\ges q\log(d/q)$ for the full DAG and moreover prove a matching lower bound (up to constants). 
\citet{ghoshal2017limits} have also established lower bounds for a range of DAG learning problems up to Markov equivalence. For example, their lower bound for sparse Gaussian DAGs is $\sigma^{2}(q\log(d/q) + q^{2}/d)/(\sigma^{2}+2w_{\max}^{2}(1+w_{\max}^{2}))$, where $w_{\max}$ depends on the $\ell_{2}$ norms of the regression coefficients. By contrast, our lower bounds depend instead on $(\beta_{\min},M)$ (cf. \eqref{eq:defn:betamin}, \eqref{eq:defn:M} for definitions).

\subsection{Problem setting}
Although our results extend to more general settings, we focus on the special case of linear Gaussian Bayesian networks under equal variances. See Remark~\ref{rem:gen} for a discussion of generalizations. Specifically, let $X=(X_{1},\ldots, X_{d})$,
\[
X_k = \langle \beta_k, X \rangle + \epsilon_k, \ \ \ \ \var(\epsilon_k) \equiv \sigma^2,  \ \ \ \ \E[\epsilon_k]=0,\
\]
that is, each node is a linear combination of its parents with independent Gaussian noise. The variance $\sigma^{2}$ of each noise term is assumed to be the same; this is the key identifiability assumption that is imposed on the model. 

More compactly, let $B=(\beta_{jk})$ denote the coefficient matrix  such that $\beta_{jk}\ne 0$ is equivalent to the existence of the edge $j \to k$. Then letting $\epsilon=(\epsilon_1,\ldots,\epsilon_d)$ we have
\begin{align}
\label{eq:lin:model}
X = B\T X + \epsilon.
\end{align}
The matrix $B$ defines a graph $G=G(B)$ by its nonzero entries, i.e.
\begin{align*}
G(B) 
= (V, E(B)),
\quad
\left\{
\begin{aligned}
V 
&= X, \\
E(B)
&= \{(j,k) : \beta_{jk}\ne 0\}.
\end{aligned}
\right.
\end{align*}
Whenever $G$ is acyclic, it is easy to check that \eqref{eq:defn:bn} holds, and hence $G$ is a Bayesian network for $X$. In the sequel we assume that $G$ is acyclic.

The following quantities are important in the sequel:
The largest in-degree of any node is denoted by $q$, i.e.
\begin{align}
\label{eq:defn:q}
q
= q(B)
:= \sup_{k}|\pa(k)|
= \sup_{k}|\supp(\beta_{k})|.
\end{align}
The absolute values of the coefficients are lower bounded by $\beta_{\min}$, i.e. 
\begin{align}
\label{eq:defn:betamin}
\beta_{\min}  
= \beta_{\min}(B)
:= \min\{|\beta_{jk}| : \beta_{jk}\ne 0\}.
\end{align}
Furthermore, assume the covariance matrix $\Sigma = \E [XX\T]$ satisfies
\begin{align}
\label{eq:defn:M}
M^{-1} \le \lambda_{\min}(\Sigma) \le \lambda_{\max}(\Sigma) \le M
\end{align}
for some $M>1$. 

Let the class of distributions satisfying the above conditions \eqref{eq:defn:q}, \eqref{eq:defn:betamin}, and \eqref{eq:defn:M} be denoted by $\mathcal{F}_{d,q}(\beta_{\min},\sigma^2,M)$. For any $F\in\mathcal{F}_{d,q}(\beta_{\min},\sigma^2,M)$ we have
\begin{align}
\label{eq:cov}
\Sigma 
= \sigma^{2}(I-B)^{-T}(I-B)^{-1}.
\end{align}
This follows directly from \eqref{eq:lin:model} and $\cov(\eps)=\sigma^{2}I$.
Since the DAG is identifiable from the observational distribution, we denote $\gr(F)$ to be the DAG associated with the distribution $F\in \mathcal{F}_{d,q}(\beta_{\min},\sigma^2,M)$. 
Finally, we introduce the variance gap:
\begin{align*}
    \Delta \equiv \min_k\min_{\substack{\anc\subseteq\nd(k) \\  \pa(k) \setminus \anc \ne \emptyset \\ \anc\subseteq \nd(\pa(k) \setminus \anc)}}\E_\anc\var(X_k\given \anc) - \sigma^2 
\end{align*}
where the subscript indicates that the expectation is being taken over the random variables in $A$.
This is the missing conditional variance on ancestors if not all the parents are conditioned on, which serves as the identifiability signal for the main algorithm. It turns out it can be explicitly expressed in terms of the edge coefficient and noise variance:
\begin{lemma}\label{lem:Delta}
$\Delta = \beta^2_{\min}\sigma^2 > 0$.
\end{lemma}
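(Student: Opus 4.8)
The plan is to rewrite $\E_\anc\var(X_k\given\anc)-\sigma^2$ as a linear--regression residual variance and then bound it from both sides. For a feasible pair $(k,\anc)$ put $S:=\pa(k)\setminus\anc\neq\emptyset$ and decompose $X_k=Z+Y+\epsilon_k$, where $Z:=\sum_{i\in\pa(k)\cap\anc}\beta_{ik}X_i$ is a function of $X_\anc$ and $Y:=\sum_{i\in S}\beta_{ik}X_i$. Writing the model in reduced form $X=(I-B\T)^{-1}\epsilon=:C\epsilon$, each $X_a=\epsilon_a+\sum_{\ell\in\an(a)}C_{a\ell}\epsilon_\ell$, and since parents are nondescendants and $\anc\subseteq\nd(k)$, the innovation $\epsilon_k$ enters neither $X_\anc$ nor $Y$; hence $\epsilon_k\indep(X_\anc,Y)$ and $\var(X_k\given\anc)=\var(Y\given\anc)+\sigma^2$. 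As $(Y,X_\anc)$ is jointly Gaussian, $\var(Y\given\anc)$ is almost surely constant and equals $\var(Y-\proj_{X_\anc}Y)$, so $\Delta=\min_{k,\anc}\var(Y-\proj_{X_\anc}Y)$ and it remains to identify this with $\beta_{\min}^2\sigma^2$.

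For the lower bound the idea is to isolate the innovation of a topologically last omitted parent. Fix a feasible $(k,\anc)$ and pick $j^*\in S$ that is not an ancestor of any other element of $S$, so that $\de(j^*)\cap S=\emptyset$. In $Y=\sum_\ell\bigl(\sum_{i\in S}\beta_{ik}C_{i\ell}\bigr)\epsilon_\ell$ the coefficient of $\epsilon_{j^*}$ equals $\beta_{j^*k}$: $C_{i,j^*}\neq0$ forces $i=j^*$ or $i\in\de(j^*)$, the latter impossible for $i\in S$, while $C_{j^*,j^*}=1$. Moreover $\epsilon_{j^*}\indep X_\anc$, since the third feasibility constraint gives $\anc\subseteq\nd(S)\subseteq\nd(j^*)$ and $j^*\notin\anc$, so $j^*$ is neither equal to nor an ancestor of any $a\in\anc$ and hence does not occur in $X_a$. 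Writing $Y=\beta_{j^*k}\epsilon_{j^*}+R$, where $R$ and $X_\anc$ are functions of $(\epsilon_\ell)_{\ell\neq j^*}$ and therefore independent of $\epsilon_{j^*}$, the Pythagorean identity gives
\[
\var(Y-\proj_{X_\anc}Y)=\beta_{j^*k}^2\sigma^2+\var(R-\proj_{X_\anc}R)\ \ge\ \beta_{j^*k}^2\sigma^2\ \ge\ \beta_{\min}^2\sigma^2 .
\]
Minimizing over $(k,\anc)$ gives $\Delta\ge\beta_{\min}^2\sigma^2>0$, which in particular yields the claimed strict positivity.

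For the matching upper bound I would exhibit one feasible pair attaining the value. Choose an edge $j^*\to k^*$ with $|\beta_{j^*k^*}|=\beta_{\min}$ and with no other parent of $k^*$ a strict descendant of $j^*$, and set $\anc:=(\pa(k^*)\setminus\{j^*\})\cup\an(j^*)$. Acyclicity gives $\pa(k^*)\setminus\anc=\{j^*\}\neq\emptyset$; also $\anc\subseteq\an(k^*)\subseteq\nd(k^*)$, and using that ancestors are nondescendants together with the choice of edge, $\anc\subseteq\nd(j^*)=\nd(\pa(k^*)\setminus\anc)$, so all three constraints hold. Here $Y=\beta_{j^*k^*}X_{j^*}$; since $\pa(j^*)\subseteq\an(j^*)\subseteq\anc$ the parental part of $X_{j^*}$ is a function of $X_\anc$ while $\epsilon_{j^*}\indep X_\anc$ as before, so $\var(X_{j^*}-\proj_{X_\anc}X_{j^*})=\sigma^2$ and $\var(Y-\proj_{X_\anc}Y)=\beta_{j^*k^*}^2\sigma^2=\beta_{\min}^2\sigma^2$. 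With the lower bound this gives $\Delta=\beta_{\min}^2\sigma^2$.

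I expect the achievability step to be the main obstacle: to realize $\beta_{\min}^2\sigma^2$ one must omit a single parent $j^*$ of some node while still conditioning on all of that node's remaining parents, and this is precisely what the third constraint in the definition of $\Delta$ can obstruct when a co-parent is a descendant of $j^*$. The argument thus hinges on choosing the $\beta_{\min}$--attaining edge so that no such co-parent exists (or, when that is impossible, on a more careful feasible set $\anc$ that still pins the residual variance to $\beta_{\min}^2\sigma^2$). The remaining work -- tracking, via the path expansion of $(I-B\T)^{-1}$, which innovations feed into $Y$ versus into $X_\anc$ -- is routine once the reduced form is in hand.
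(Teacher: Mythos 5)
Your decomposition and your lower bound are correct, and they supply exactly the step that the paper's own proof leaves implicit: after the reduction (by the law of total variance conditioned on $\pa(k)$ in the paper, via the reduced form $X=(I-B\T)^{-1}\epsilon$ in your write-up), one has $\E_\anc\var(X_k\given \anc)-\sigma^2=\var\bigl(\beta_S\T X_S-\proj_{X_\anc}\beta_S\T X_S\bigr)$ with $S=\pa(k)\setminus\anc$, and isolating the innovation $\epsilon_{j^*}$ of a topologically maximal $j^*\in S$ --- which the constraint $\anc\subseteq\nd(S)$ makes independent of $X_\anc$ --- gives the bound $\ge\beta_{j^*k}^2\sigma^2\ge\beta_{\min}^2\sigma^2>0$. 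This is also the only direction the paper actually needs: substituting $\beta_{\min}^2\sigma^2$ for $\Delta$ in Theorem~\ref{thm:ub} uses $\Delta\ge\beta_{\min}^2\sigma^2$, since a larger gap only helps.

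The gap you flagged in the achievability step is genuine, and it cannot be closed by a cleverer feasible set $\anc$: the equality in Lemma~\ref{lem:Delta} can fail. Take three nodes $j,m,k$ with edges $j\to m$ and $m\to k$ of coefficient $1$ and $j\to k$ of coefficient $\beta_{\min}\in(0,1)$ (pad with isolated nodes if one insists on $q\le d/2$). The constraint $\anc\subseteq\nd(\pa(k)\setminus\anc)$ forbids conditioning on $m$ while omitting $j$, because $m\in\de(j)$; the only feasible pairs are $(k,\anc=\{j\})$, $(k,\anc=\emptyset)$ and $(m,\anc=\emptyset)$, with values $\sigma^2$, $\bigl((1+\beta_{\min})^2+1\bigr)\sigma^2$ and $\sigma^2$, so $\Delta=\sigma^2>\beta_{\min}^2\sigma^2$. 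Hence equality holds only under an additional structural assumption --- precisely the one you isolate, namely that some $\beta_{\min}$-attaining edge $j^*\to k^*$ has no co-parent of $k^*$ inside $\de(j^*)$, in which case your choice $\anc=(\pa(k^*)\setminus\{j^*\})\cup\an(j^*)$ does attain $\beta_{\min}^2\sigma^2$. The statement that is true in general, and that your argument fully proves, is $\Delta\ge\beta_{\min}^2\sigma^2>0$; the paper's proof asserts the final equality in a single line and is subject to the same caveat, but nothing downstream is affected because only the lower bound on $\Delta$ is used.
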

The proof of this lemma is a straightforward calculation; see Appendix~\ref{app:Delta} for details.
\begin{remark}
\label{rem:gen}
Both our upper and lower bounds can be generalized as follows: Although we assume Gaussianity for simplicity, everything extends to subgaussian families without modification. This is because the upper bound analysis relies only on subgaussian concentration, and the lower bounds easily extend to subgaussian models (i.e. since subgaussian also contains Gaussian as a subclass).
Furthermore, the equal variance assumption can be relaxed to more general settings as long as $B$ can be identified by Algorithm~\ref{alg:eqvar}. Examples include (a) the ``unequal variance'' condition from \citet{ghoshal2017sem} (see Assumption~1 therein) and (b) if noise variances are known up to some ratio as in \citet{loh2014causal}. Moreover, both of these identifiability conditions include the naive equal variance condition as a special case, hence the lower bounds still apply. This implies more general optimality results for a wider class of Bayesian networks. 
\end{remark}

\section{Algorithm and upper bound}\label{sec:ub}

We begin with stating the sufficient conditions on the sample size for DAG recovery under the equal variance assumption.
Namely, we present an algorithm (Algorithm \ref{alg:eqvar}) that takes samples from a distribution $F\in \mathcal{F}_{d,q}(\beta_{\min},\sigma^2,M)$ as an input and returns the DAG $G(F)$ with high probability.
We first
 state an upper bound for the number of samples required in Algorithm \ref{alg:eqvar} in Theorem \ref{thm:ub}.

\begin{theorem}\label{thm:ub}
For any $F\in\mathcal{F}_{d,q}(\beta_{\min},\sigma^2,M)$, let $\wh{\gr}$ be the DAG return by Algorithm~\ref{alg:eqvar} with $\gamma=\Delta/2$. If
\[
n \gtrsim \frac{M^5}{\Delta}\bigg(q\log \frac{d}{q} + \log \delta \bigg) \, ,
\]
then $\truepr(\wh{\gr} = \gr(F)) \gtrsim 1 - \delta$.
\end{theorem}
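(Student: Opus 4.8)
The plan is to pass to a single high‑probability event on which every empirical conditional variance examined by Algorithm~\ref{alg:eqvar} is within $\Delta/3$ of its population value; on that event the thresholding at $\gamma=\Delta/2$ forces the algorithm to make exactly the decisions it would make with population quantities, and a population‑level correctness statement then identifies the output with $\gr(F)$. Recall that the algorithm reconstructs the topological layers of the DAG in rounds: at each of at most $d$ rounds, for each not‑yet‑ordered node $k$, it evaluates $\wh\var(X_k\mid X_A)$ over conditioning sets $A$ of size at most $q$ drawn from the already‑ordered nodes, declares $k$ ready once this value falls within $\gamma$ of $\sigma^2$ (a minimal such $A$ being returned as $\pa(k)$), and iterates.

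\textbf{Population correctness.} Two facts do the work. (i) If $A$ is a set of already‑ordered nodes and $k$ still has an unordered parent $p$, then ordered sets are ancestral, so $A\subseteq\nd(k)$ and $A\subseteq\nd(p)$; hence the pair $(k,A)$ is admissible in the minimization defining $\Delta$, and — since for a Gaussian $\var(X_k\mid X_A)$ is a deterministic function of $A$, so the expectation $\E_A$ in the definition of $\Delta$ is vacuous — $\var(X_k\mid X_A)\ge\sigma^2+\Delta>\sigma^2+\gamma$ by Lemma~\ref{lem:Delta}. (ii) If instead $\pa(k)\subseteq A\subseteq\nd(k)$, then $\langle\beta_k,X\rangle$ is a function of $X_A$ while $\epsilon_k\indep X_A$, so $\var(X_k\mid X_A)=\sigma^2\le\sigma^2+\gamma$. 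Since $\Sigma\succ0$ by \eqref{eq:defn:M}, the coordinates of $X$ indexed by an ancestral set are linearly independent, so among sets of size $\le q$ the value $\sigma^2$ is attained exactly at sets containing $\pa(k)$, with $\pa(k)$ the minimal one; together with $|\pa(k)|\le q$ this shows the threshold $\sigma^2+\Delta/2$ cleanly separates the admissible next nodes and recovers their parent sets, so the population trajectory reconstructs $\gr(F)$ exactly.

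\textbf{Concentration and union bound.} Fix $k$ and a candidate $A$ with $|A|\le q$. Conditioning on the $n\times|A|$ block of the data matrix indexed by $A$, Gaussianity makes the $X_k$‑column an $N(\cdot,\var(X_k\mid X_A)\,I_n)$ vector whose mean lies in the column span, so $n\,\wh\var(X_k\mid X_A)/\var(X_k\mid X_A)\sim\chi^2_{n-|A|}$; equivalently one controls $\wh\var(X_k\mid X_A)=\wh\Sigma_{kk}-\wh\Sigma_{kA}\wh\Sigma_{AA}^{-1}\wh\Sigma_{Ak}$ through the deviation of the $(|A|{+}1)$‑dimensional empirical covariance submatrix, propagated through the inverse — and it is here that the powers of $M$ enter, via $\|\Sigma_{AA}^{-1}\|\le M$ (squared in the first‑order term), $\Sigma_{kk}\le M$, and $\var(X_k\mid X_A)\le M$. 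Carrying out the bookkeeping yields a per‑pair bound
\[
\pr\!\Big(\big|\wh\var(X_k\mid X_A)-\var(X_k\mid X_A)\big|>\tfrac{\Delta}{3}\Big)\;\le\;2\exp\!\big(-c\,n\Delta/M^{5}\big)
\]
for a universal $c>0$. A union bound over the $\le d$ rounds, $\le d$ nodes, and $\le\binom{d}{q}$ candidate sets bounds the total failure probability by $d^{2}\binom{d}{q}\exp(-cn\Delta/M^{5})$; since $q\le d/2$ gives $\log\binom{d}{q}+2\log d\lesssim q\log(d/q)$, this is at most $\delta$ once $n\gtrsim\frac{M^{5}}{\Delta}\big(q\log(d/q)+\log(1/\delta)\big)$, and on the complementary event $\wh{\gr}=\gr(F)$.

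\textbf{Main obstacle.} The delicate step is the concentration estimate, and this is exactly where the analysis improves on the earlier $O(q^{2}\log d)$ bound: the union bound ranges over \emph{exponentially many} conditioning sets, so the tail must be sharp enough that this cost enters the sample size only \emph{additively} as $q\log(d/q)$, while at the same time the downward bias $\approx-\var(X_k\mid X_A)\,|A|/n$ of a residual variance fit on $|A|$ covariates must be absorbed — which is why the conditioning sets are capped at size $q$ rather than allowed to grow with the ordered prefix, and why the regime $d\gg n$ remains accessible. A secondary subtlety is certifying \emph{exact} parent recovery rather than just the topological order: when a parent $p$ of $k$ is dropped from an otherwise‑ordered conditioning set $A$, one must lower bound $\var(X_k\mid X_A)-\sigma^{2}=\var(\langle\beta_k,X\rangle\mid X_A)$, which can be reduced (by conditioning on additional already‑ordered variables) to $\beta_{pk}^{2}\var(X_p\mid\text{rest})\ge\beta_{\min}^{2}/M$ via a Schur‑complement bound and $\lambda_{\min}(\Sigma)\ge M^{-1}$, so that the witnessing‑set selection is again governed by $(\beta_{\min},M)$ alone.
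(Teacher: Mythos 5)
Your strategy is the same as the paper's: establish a uniform good event on which every $v_{kC}$ ($|C|\le q$) is close to $\var(X_k\given X_C)$, prove a deterministic correctness statement on that event, and pay for uniformity with a union bound over the at most $d\binom{d}{q}$-type collection of pairs, which is exactly where $q\log(d/q)$ enters (this mirrors Lemma~\ref{lem:ub:est}, Lemma~\ref{lem:ub:tailprob}, and the union bound in the paper). One step fails as written, however: your tolerance $\Delta/3$ is incompatible with the threshold $\gamma=\Delta/2$ fixed in the statement. In the backward phase of Algorithm~\ref{alg:eqvar}, a node $i\in C_j$ is removed only if $|v_{\wh{\tau}_j C_j}-v_{\wh{\tau}_j C_j\setminus i}|\le\gamma$; for a non-parent $i$ the true difference is $0$, but with per-estimate error $\Delta/3$ you can only guarantee the empirical difference is at most $2\Delta/3>\Delta/2$, so false parents may survive the pruning. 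You need per-estimate error at most $\Delta/4$ (the paper's choice), which simultaneously keeps true parents since the signal is at least $\Delta$ and $\Delta-2\cdot(\Delta/4)\ge\gamma$. A related, smaller issue is that the algorithm you describe is not quite Algorithm~\ref{alg:eqvar}: the ordering step picks $\argmin_k\sigma_k$ rather than accepting any node whose minimized variance falls within $\gamma$ of $\sigma^2$, and parents come from best-subset selection followed by $\gamma$-pruning, not from a minimal near-$\sigma^2$ set; your separation argument (sources at $\sigma^2$, non-sources at $\ge\sigma^2+\Delta$) does cover the argmin rule and your final paragraph contains the needed pruning signal, but the proof should be phrased for the actual algorithm. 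Otherwise the content matches the paper; your exact conditional $\chi^2_{n-|A|}$ representation of $n\,\wh{\var}(X_k\given X_A)$ is a nice alternative to the paper's covariance-perturbation route (it avoids the $e^{q}$ dimension factor in the per-pair tail), though the claimed $\exp(-c\,n\Delta/M^5)$ rate is asserted rather than derived, whereas the paper obtains the $M^5$ explicitly by propagating $\lVert\wh{\Sigma}_{CC}^{-1}-\Sigma_{CC}^{-1}\rVert$.
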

The proof of this result can be found in Appendix~\ref{app:ub}. The obtained sample complexity depends on the variance gap $\Delta$, which serves as signal strength, and covariance matrix norm $M$, which shows up when estimating conditional variances. Treating these parameters as fixed, the sample complexity scales with $q\log(d/q)$. The order of this complexity arises mainly from counting all possible conditioning sets. The proof follows the correctness of Algorithm \ref{alg:eqvar}, which consists of two main steps: \textit{Learning ordering} and \textit{Learning parents}.

Algorithmically, the first step is the same as \citet{chen2018causal}, however, our analysis is sharper: We separately analyze the estimation of each conditional variance directly rather than indirectly via the inverse covariance matrix.
This leads to the improved sample complexity in Theorem~\ref{thm:ub}. This step is where we exploit the equal variance assumption: The conditional variance $\var(X_k\given C)$ of each random variable $X_k$ is a constant $\sigma^2$ if and only if $\pa(k)\subseteq C$ for any nondescendant set $C$.
This implies that the variance of any non-source node in the corresponding subgraph would be larger than $\sigma^2$.
Therefore, when all conditional variances $v_{kC}$ are correctly estimated with error within some small factor of the signal $\Delta$ (see Lemma~\ref{lem:ub:est}), identifying the node with the smallest $\sigma_k$ yields a source node in the underlying subgraph. Recall that $\sigma_k$ is the minimum variance estimation that node $k$ can achieve conditioned on at most $q$ nondescendants. Finally, recursively applying the above step leads to a valid topological sort.

In the second step, given the correct ordering, we use Best Subset Selection (BSS) along with a backward phase to learn the parents for each node. 
Note that BSS is already applied in the step \textbf{1.(c).i.} of Algorithm~\ref{alg:eqvar} and the candidate set $C_j$ can be stored for each $\wh{\tau}_j$, thus there is no additional computational cost. 
Again, when all conditional variances are well approximated by their sample counterpart $v_{kC}$, $C_j$ would be a superset of the true parents of current node $\wh{\tau}_j$, otherwise the minimum would not be achieved. 
Meanwhile, removal of any true parent $i\in\pa(\wh{\tau}_j)$ from $C_j$ would induce a significant change in conditional variances, which is quantified by $\Delta$ as well. This is used to design a tuning parameter $\gamma$ in the backward phase for pruning $C_j$.
Finally, we show the tail probability of conditional variance estimation error is well bounded to get the desired sample complexity in Lemma~\ref{lem:ub:tailprob}.

\begin{algorithm}[t]
\caption{\textsc{LearnDAG} algorithm}
\label{alg:eqvar}
\textbf{Input:} Sample covariance matrix $\wh{\Sigma}=\frac{1}{n}\sum_{i=1}^n X_iX_i\T$, backward phase threshold $\gamma$\\
\textbf{Output:} $\widehat{\gr}$.

\begin{enumerate}
    \item \textit{Learning Ordering}:
        \begin{enumerate}
            \item Initialize empty ordering $\wh{\tau}=[]$
            \item Denote $v_{kC}: = \wh{\Sigma}_{kk} - \wh{\Sigma}_{kC}\wh{\Sigma}_{CC}^{-1}\wh{\Sigma}_{Ck}$
            \item For $j=1,2,\ldots, d$
            \begin{enumerate}
                \item Calculate $\sigma_{k}: = \min_{C\subseteq  \wh{\tau}, |C|\le q} v_{kC}$
                \item Update $\wh{\tau}=[\wh{\tau}, \argmin_k \sigma_{k}]$
            \end{enumerate}
        \end{enumerate}

    \item \textit{Learning Parents}:
        \begin{enumerate}
        \item Initialize empty graph $\wh{\gr}=\emptyset$
        \item For $j=1,2,\ldots,d$
            \begin{enumerate}
                \item Let $C_j = \argmin_{C\subseteq  \wh{\tau}_{[1:j-1]}, |C| \le q} v_{\wh{\tau}_jC}$
                \item Set
                \begin{align*}
                    \hspace*{-1cm}
                    \pa_{\wh{\gr}}(\wh{\tau}_j)  = C_j \setminus \bigg\{i \in C_j \bigg| |v_{\wh{\tau}_j C_j}- v_{\wh{\tau}_j C_j\setminus i}|\le \gamma \bigg\}
                \end{align*}
            \end{enumerate}
        \end{enumerate}
    \item \textit{Return} $\widehat{\gr}$
\end{enumerate}
\end{algorithm}

When the true variance gap $\Delta$ is unknown, we can select the tuning parameter $\gamma$ according to the following theorem:
\begin{theorem}\label{thm:ub:tuning}
For any $F\in\mathcal{F}_{d,q}(\beta_{\min},\sigma^2,M)$, let $\wh{\gr}$ be the DAG return by Algorithm~\ref{alg:eqvar} with tuning parameter
\[
\gamma \asymp \frac{2M^5 q\log(d/q)}{n}\,.
\]
If 
\[
n \gtrsim \frac{M^5}{\Delta}q\log \frac{d}{q}\, ,
\]
then $\truepr(\wh{\gr} = \gr(F)) \gtrsim 1 - \exp(-q\log(d/q))$.
\end{theorem}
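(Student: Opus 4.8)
The plan is to reduce everything to the analysis already carried out for Theorem~\ref{thm:ub}, exploiting that the threshold $\gamma$ enters Algorithm~\ref{alg:eqvar} \emph{only} through the pruning step of the \textbf{Learning Parents} phase. Inspecting that proof, I would isolate two facts. First, on a high-probability event $\mathcal E$ --- the event that all sample conditional variances $v_{kC}$, over nodes $k$ and sets $C$ with $|C|\le q$, are within a uniform tolerance $\eta$ of their population values (cf. Lemma~\ref{lem:ub:est}, Lemma~\ref{lem:ub:tailprob}) --- the \textbf{Learning Ordering} phase returns a valid topological sort and each set $C_j$ it produces contains $\pa(\wh\tau_j)$; this holds as soon as $n\gtrsim \frac{M^5}{\Delta}(q\log(d/q)+\log(1/\delta))$, and does not involve $\gamma$. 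Second, \emph{given} a correct ordering with $C_j\supseteq\pa(\wh\tau_j)$, the \textbf{Learning Parents} phase outputs $\pa(\wh\tau_j)$ exactly whenever $\eta\le\gamma<\Delta-\eta$, where $\eta$ now denotes a uniform bound (valid on $\mathcal E$) on the estimation error of the differences $v_{\wh\tau_jC_j}-v_{\wh\tau_jC_j\setminus i}$: deleting a non-parent $i$ has zero population effect on this difference, so $|v_{\wh\tau_jC_j}-v_{\wh\tau_jC_j\setminus i}|\le\eta\le\gamma$ and $i$ is pruned; deleting a true parent changes the population conditional variance by at least $\Delta$ (by the definition of the variance gap together with Lemma~\ref{lem:Delta}), so $|v_{\wh\tau_jC_j}-v_{\wh\tau_jC_j\setminus i}|\ge\Delta-\eta>\gamma$ and $i$ is kept. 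Hence the entire proof comes down to checking that the \emph{data-driven} choice $\gamma\asymp 2M^5q\log(d/q)/n$ falls inside the admissible window $(\eta,\Delta-\eta)$ under the stated hypothesis on $n$.

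To verify this I would first set $\delta=\exp(-q\log(d/q))$ in $\mathcal E$, so that $\log(1/\delta)=q\log(d/q)$, the sample-size requirement for $\mathcal E$ becomes exactly $n\gtrsim\frac{M^5}{\Delta}q\log(d/q)$, and $\pr(\mathcal E)\ge 1-\exp(-q\log(d/q))$. On $\mathcal E$, the union bound behind Lemma~\ref{lem:ub:tailprob}, taken over the $\le\exp(O(q\log(d/q)))$ triples $(k,C,i)$, gives the uniform tolerance $\eta\lesssim M^5q\log(d/q)/n$; the key point is that the conditional-variance change produced by deleting a non-parent is governed by a partial correlation whose population value is zero, so it concentrates at the \emph{linear} rate $1/n$, and after the union bound $\eta$ scales like $q\log(d/q)/n$ rather than its square root. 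The window $(\eta,\Delta-\eta)$ is nonempty since $\Delta=\beta_{\min}^2\sigma^2>0$ (Lemma~\ref{lem:Delta}). For the lower end, choosing the absolute constant in $\gamma\asymp 2M^5q\log(d/q)/n$ to exceed the constant in the bound on $\eta$ gives $\gamma\ge\eta$. For the upper end, writing the hypothesis as $n\ge cM^5q\log(d/q)/\Delta$ for a suitable absolute constant $c$ yields $\gamma\le\frac 2c\Delta$ and $\eta\le\frac{c'}{c}\Delta$; taking $c$ large enough that $\frac2c+\frac{c'}{c}<1$ gives $\gamma<\Delta-\eta$. Therefore $\eta\le\gamma<\Delta-\eta$, the \textbf{Learning Parents} phase is correct, and together with the first fact $\wh\gr=\gr(F)$ on $\mathcal E$, proving $\truepr(\wh\gr=\gr(F))\gtrsim 1-\exp(-q\log(d/q))$.

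The step I expect to be the real obstacle is pinning down the \emph{order} of the uniform tolerance $\eta$: for the sandwich $\eta\le\gamma<\Delta-\eta$ to remain feasible with a sample size that is only \emph{linear} in $1/\Delta$, one cannot afford a $1/\sqrt n$-type error (which would force $n\gtrsim 1/\Delta^2$), so one must use the sharper $q\log(d/q)/n$ concentration for the relevant conditional-variance differences --- precisely the estimate already supplied for Theorem~\ref{thm:ub} via Lemma~\ref{lem:ub:est} and Lemma~\ref{lem:ub:tailprob}. Beyond that, the work is bookkeeping: propagating a single high-probability event through both phases of the algorithm, matching the absolute constants relating $\gamma$, $\eta$ and $\Delta$, and substituting $\delta=\exp(-q\log(d/q))$. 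No inequality beyond those used in the proof of Theorem~\ref{thm:ub} should be required.
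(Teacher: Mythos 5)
Your proposal is correct and follows essentially the same route as the paper: the paper likewise revisits the tolerance-parameterized correctness condition behind Lemma~\ref{lem:ub:est} (requiring the threshold to satisfy $2\epsilon<\gamma<\Delta/2-2\epsilon$), sets $\gamma=3\epsilon$ with $\epsilon\asymp 2M^5 q\log(d/q)/n$, and applies Lemma~\ref{lem:ub:tailprob} with a union bound to get failure probability $\exp(q\log(d/q)-n\epsilon/M^5)=\exp(-q\log(d/q))$, the requirement $\epsilon<\Delta/10$ yielding $n\gtrsim M^5q\log(d/q)/\Delta$. Apart from constant bookkeeping (your window $\eta\le\gamma<\Delta-\eta$ versus the paper's $2\epsilon<\gamma<\Delta/2-2\epsilon$) and your extra heuristic about zero partial correlations, the argument is the same.
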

The proof of this result can be found in Appendix~\ref{app:ub:tuning}. 
\begin{remark}
\label{rem:lasso}
A computationally attractive alternative to BSS is the Lasso, or $\ell_{1}$-regularized least squares regression. Unlike BSS, the Lasso requires restrictive incoherence-type conditions. If these conditions (or related conditions such as irrepresentability) are imposed on each parent set, then the Lasso can be used to recover the full DAG under a similar sample complexity scaling \citep[see e.g.][]{wainwright2009}. Furthermore, these incoherence-type conditions can be further relaxed through the use of nonconvex regularizers such as the MCP \citep{zhang2010} or SCAD \citep{fan2001}; see also \citet{loh2014nonconvex}.
\end{remark}

\section{Lower bound}\label{sec:lb}

We will now present the necessary conditions on the sample size for DAG recovery under the equal variance assumption.
Namely, we present a subclass of  $\mathcal{F}_{d,q}(\beta_{\min},\sigma^2,M)$ such that any estimator that successfully recovers the underlying DAG in this subclass with high probability requires a prescribed minimum sample size.
For this, we rely on Fano's inequality, which is a standard technique for establishing necessary conditions for graph recovery.
See Corollary~\ref{col:lb:fano} for the exact variant we use.

\begin{theorem}\label{thm:lb}
Assume $q\le d/2$. If 
\[
n \le (1-2\delta)\max\bigg( \frac{\log d}{\beta_{\min}^2}, \frac{q\log(d/q)}{M^2-1} \bigg)
\]
then for any estimator $\wh{\gr}$,
\[
\sup_{F\in \mathcal{F}_{d,q}(\beta_{\min},\sigma^2,M)}\truepr(\wh{\gr}\ne G(F)) \ge \delta - \frac{\log 2}{\log d} \, .
\]
\end{theorem}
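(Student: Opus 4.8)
The plan is to reduce to Fano's inequality (Corollary~\ref{col:lb:fano}) applied to two separate finite sub-families of $\mathcal{F}_{d,q}(\beta_{\min},\sigma^2,M)$: one engineered so that distinguishing its members costs $\gtrsim \log d/\beta_{\min}^2$ samples, and one engineered so that distinguishing its members costs $\gtrsim q\log(d/q)/(M^2-1)$ samples. Recall that for a family $F_1,\dots,F_N$ with pairwise distinct (identifiable) DAGs $\gr(F_i)$, Corollary~\ref{col:lb:fano} gives, for every estimator, $\max_i \truepr_{F_i}(\wh\gr\ne\gr(F_i)) \ge 1 - (n\,\ol{\KL}+\log 2)/\log N$, where $\ol{\KL}$ upper bounds the average single-sample Kullback--Leibler divergence between members. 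Since the supremum over all of $\mathcal{F}_{d,q}(\beta_{\min},\sigma^2,M)$ dominates the maximum over either sub-family, it suffices to check that each of the two resulting bounds is at least $\delta - \log 2/\log d$ under the hypothesis $n\le(1-2\delta)\max(\cdots)$; the ``$\max$'' is then automatic, because the hypothesis pushes $n$ below the critical sample size of whichever of the two sub-families is the harder one.

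For the $\log d/\beta_{\min}^2$ term I would take the $N=d$ distributions $F_1,\dots,F_d$ given by the empty DAG together with the $d-1$ single-edge DAGs $\gr_k$ ($k=2,\dots,d$) carrying the one edge $1\to k$ with coefficient exactly $\beta_{\min}$. By \eqref{eq:cov}, writing $e_1,\dots,e_d$ for the standard basis, $\Sigma_k = \sigma^2(I+\beta_{\min}e_ke_1\T)(I+\beta_{\min}e_1e_k\T)$ is a rank-$\le 2$ perturbation of $\sigma^2 I$ supported on $\Span(e_1,e_k)$, whose two nontrivial eigenvalues are $\sigma^2 r$ and $\sigma^2 r^{-1}$ with $r = 1+\tfrac12\beta_{\min}^2+\sqrt{\beta_{\min}^2+\tfrac14\beta_{\min}^4}\ge 1$; since their product is $\sigma^4$, the spectral condition \eqref{eq:defn:M} collapses to a scalar inequality on $r$ (given $\sigma^2$) that holds precisely in the regime where $\mathcal{F}_{d,q}(\beta_{\min},\sigma^2,M)$ is nonempty. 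Moreover $B$ is nilpotent, so $\det(I-B)=1$ and all these Gaussians share the determinant $\sigma^{2d}$; the log-determinant term in the Gaussian KL formula vanishes and a one-line trace computation gives $\kldiv{F_k}{F_{k'}} = \tfrac12\|\beta_{\min}e_k-\beta_{\min}e_{k'}\|^2 = \beta_{\min}^2$ for $k\ne k'$ (and $\beta_{\min}^2/2$ against the empty DAG). Hence $\ol{\KL}\le\beta_{\min}^2$, and Corollary~\ref{col:lb:fano} gives error at least $1-(n\beta_{\min}^2+\log 2)/\log d \ge 2\delta - \log 2/\log d \ge \delta - \log 2/\log d$ as soon as $n\le(1-2\delta)\log d/\beta_{\min}^2$.

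For the $q\log(d/q)/(M^2-1)$ term I would instead fix node $1$ and let its parent set $S$ range over all $q$-element subsets of $\{2,\dots,d\}$, so $N=\binom{d-1}{q}$, every edge carrying a common coefficient $\beta$ with $\beta^2\asymp (M-1)^2/(Mq)$ --- the largest value compatible with \eqref{eq:defn:M} --- and $\beta\ge\beta_{\min}$ (the regime in which this term governs the $\max$). Again $\Sigma_S = \sigma^2(I+e_1 v_S\T)(I+v_S e_1\T)$ with $v_S = \beta\sum_{j\in S}e_j$ is a rank-$\le 2$ perturbation of $\sigma^2 I$ with nontrivial eigenvalues $\sigma^2 r_S^{\pm1}$, $r_S = 1+\tfrac12\|v_S\|^2+\sqrt{\|v_S\|^2+\tfrac14\|v_S\|^4}$ and $\|v_S\|^2 = q\beta^2$; the choice of $\beta$ is exactly what forces $r_S\le M$, so \eqref{eq:defn:M} holds (using $\lambda_{\max}(\Sigma)\lambda_{\min}(\Sigma)=\sigma^4$ once more). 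As before all determinants equal $\sigma^{2d}$, and the trace computation yields $\kldiv{F_S}{F_{S'}} = \tfrac12\beta^2\,|S\triangle S'| \le q\beta^2 \asymp (M-1)^2/M \le M^2-1$, so $\ol{\KL}\le M^2-1$. Since $\log\binom{d-1}{q}\gtrsim q\log(d/q)$ --- which can, if one prefers, be strengthened via a Varshamov--Gilbert packing keeping the pairwise $|S\triangle S'|$ of order $q$ --- Corollary~\ref{col:lb:fano} gives error at least $1-(n(M^2-1)+\log 2)/(c\,q\log(d/q))$, which is $\ge\delta-\log 2/\log d$ once $n\le(1-2\delta)\,q\log(d/q)/(M^2-1)$. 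Taking the harder of the two sub-families then proves the theorem.

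The step I expect to be the real obstacle is verifying that these two families genuinely lie in $\mathcal{F}_{d,q}(\beta_{\min},\sigma^2,M)$: condition \eqref{eq:defn:M} caps how large the edge coefficients may be, and the second construction only beats the first when $\beta$ can be pushed up to order $(M-1)/\sqrt{Mq}$, so one must argue this choice is simultaneously $\ge\beta_{\min}$ and spectrally admissible in the parameter regime of interest --- here the identity $\lambda_{\max}(\Sigma)\lambda_{\min}(\Sigma)=\sigma^4$ for these rank-two perturbations is what makes the bookkeeping clean. The remaining ingredients are routine: the Gaussian KL reduces to a pure trace once nilpotency ($\det(I-B)=1$) kills the determinant term, and the packing estimate $\log\binom{d-1}{q}\gtrsim q\log(d/q)$ is standard.
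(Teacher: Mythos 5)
Your proposal is correct and reaches the theorem, but it takes a genuinely different route from the paper for the $q\log(d/q)/(M^2-1)$ term. The paper's Ensemble~A uses the \emph{global} class of sparse DAGs: it counts $\log N \asymp dq\log(d/q)$ via Lemma~\ref{lem:lb:numdag} and then bounds every pairwise KL crudely by $O((M^2-1)d)$ using only the spectral condition \eqref{eq:defn:M} (a trace/Cauchy--Schwarz bound), so the factor $d$ appears in both numerator and denominator of Fano's ratio and cancels. You instead build a \emph{local} ensemble --- a single node with parent set $S$ ranging over $\binom{d-1}{q}$ choices and common coefficient $\beta$ --- so that $\log N \gtrsim q\log(d/q)$ directly and the KL is computed exactly as $\tfrac{\beta^2}{2}|S\triangle S'| \le q\beta^2 \le (M-1)^2/M \le M^2-1$, using the rank-two structure and $\det(I-B)=1$. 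Both give the same rate; your version has the advantage of an exact KL and makes transparent where the $M^2-1$ comes from, while the paper's version has the advantage of only needing the spectral bound (no explicit coefficient choice) at the price of the loose $O(d)$ KL bound. Your $\log d/\beta_{\min}^2$ ensemble is essentially the paper's Ensemble~B (single-edge DAGs with coefficient $\beta_{\min}$), just with the source fixed so $N=d$ instead of $d(d-1)$; your KL value $\beta_{\min}^2$ matches the paper's computation, and the Fano bookkeeping goes through identically. Two caveats, neither of which puts you below the paper's own level of rigor: (i) the membership issue you flag --- needing $\beta\ge\beta_{\min}$ with $q\beta^2\le (M-1)^2/M$ so the star covariance has spectrum in $[M^{-1},M]$ --- is a genuine parameter-compatibility assumption, but note that you may simply take $\beta=\beta_{\min}$ (smaller $\beta$ only shrinks the KL), and the paper's Ensemble~A implicitly needs the very same compatibility, since its counting subclass contains nodes of in-degree exactly $q$ whose coefficients must all exceed $\beta_{\min}$ while \eqref{eq:defn:M} holds; the paper's Ensemble~B likewise never verifies \eqref{eq:defn:M} for its single-edge models, so your remark that admissibility ``holds when the class is nonempty'' is a mild overstatement but of the same kind the paper makes silently. (ii) There is constant-level slack ($\log\binom{d-1}{q}$ versus $q\log(d/q)$, $\log(d-1)$ versus $\log d$) in matching the exact constants displayed in Theorem~\ref{thm:lb}, but the paper's Appendix~\ref{app:lb} has the same slack (e.g.\ its KL bound $\beta_{\min}^2+\beta_{\min}^4/2$ and $\log N\asymp 2\log d$), so this is not a substantive gap.
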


In Theorem \ref{thm:lb}, we state two sample complexity lower bounds: $q\log(d/q)/(M^2-1)$ and $\log d/\beta_{\min}^2$. Though the first one dominates when fixing other parameters as constants, the second one reveals the dependency on the signal strength $\beta_{\min}$. This can also be seen from the upper bound in Theorem~\ref{thm:ub} by replacing $\Delta=\beta_{\min}^2\sigma^2$ (cf. Lemma~\ref{lem:Delta}).
We will present two ensembles for each bound.
The first one is the whole set of sparse DAGs $\mathcal{G}_{d,q}$, and the second is the set of DAGs with only one edge, which is constructed to study the dependency on the coefficient $\beta_{\min}$.

For the first ensemble, we borrow the ideas from \citet{santhanam2012information} to count the number of DAGs in $\mathcal{G}_{d,q}$.
The only difference is we consider DAGs instead of undirected graphs.
Also, it is easy to bound the KL divergence between any two distributions in this ensemble due to Gaussianity, which would lead to the bound $q\log(d/q)/(M^2-1)$.
For the second ensemble, it is easy to count the size of this ensemble since we consider the DAGs with only one edge. Then all possibilities of any different pair of edges are analyzed to bound the KL divergence.
This ensemble gives us the bound $\log d/\beta_{\min}^2$. The detailed proof can be found in Appendix~\ref{app:lb}.

For comparison, \citet{ghoshal2017limits} previously established a lower bound for general Gaussian DAGs (i.e. without equal variances) of 
\begin{align*}
\Omega\Big(\sigma^{2}\frac{q\log(d/q) + q^{2}/d}{\sigma^{2}+2w_{\max}^{2}(1+w_{\max}^{2})}\Big),
\end{align*} 
where $w_{\max}$ depends on the $\ell_{2}$ norms of the regression coefficients. 
Holding $\sigma^2$ constant, $w_{\max}^2$ is similar to the maximum marginal variance of the variables, which is comparable with our definition of $M$ as an upper bound on $\norm{\Sigma}$.
By contrast, under the stronger assumption of equal variances, our lower bound is 
\begin{align*}
\Omega\Big(\frac{\log d}{\beta_{\min}^2}\maxop \frac{q\log(d/q)}{M^2-1} \Big),
\end{align*}
which is
a comparable lower bound. This is interesting since by restricting to simpler equal variance models (i.e. a smaller family), the problem should become easier, however, our analysis shows this is not the case.
In particular, our lower bounds do not follow from previous work, and require a slightly different analysis as outlined in Appendix~\ref{app:lb}.

\section{Reconstructing a DAG from its ordering}\label{sec:lb:given_ordering}

The second step of Algorithm~\ref{alg:eqvar} may be of interest in its own right: Abstracted away, this step seeks to reconstruct a DAG from knowledge of its topological sort.
We claim that the second step of Algorithm~\ref{alg:eqvar} is in fact sample optimal for learning the parents of each node (and hence all of $G$) given the true ordering of $\gr$ under more general assumptions. 

Dropping the equal variance condition from $\mathcal{F}_{d,q}(\beta_{\min},\sigma^2,M)$, define $\sigma^2_k:=\var(\epsilon_k)$ and let $\overline{\mathcal{F}}_{d,q}(\beta_{\min},\sigma_{\max}^2,M)$ denote the class of Gaussian distributions such that \eqref{eq:defn:q}, \eqref{eq:defn:betamin}, and \eqref{eq:defn:M} hold and
\begin{align*}
\sup_{k}\sigma^2_k\le \sigma^2_{\max},
\end{align*}
i.e. $\sigma^2_k$ is allowed to depend on $k$.
Note that
$\mathcal{F}_{d,q}(\beta_{\min},\sigma_{\max}^2,M)\subset \overline{\mathcal{F}}_{d,q}(\beta_{\min},\sigma_{\max}^2,M)$. 
Furthermore, we modify the definition of the variance gap for $\overline{\mathcal{F}}_{d,q}(\beta_{\min},\sigma_{\max}^2,M)$ as follows:
\begin{align*}
    \overline{\Delta} \equiv \min_k\min_{\substack{\anc\subseteq\nd(k) \\  \pa(k) \setminus \anc \ne \emptyset}}\E_\anc\var(X_k\given \anc) - \sigma_k^2 \,.
\end{align*}
Finally, given a known topological sort $\tau$ of $G$, let $\wh{\gr}(\tau)$ be the DAG returned by the second step of Algorithm~\ref{alg:eqvar} with $\gamma=\overline{\Delta}/2$.

\begin{remark}
As with the rest of our results, these result extend to subgaussian models without issue. See Remark~\ref{rem:gen}.
\end{remark}

Using the second part of Lemma~\ref{lem:ub:est}, Lemma~\ref{lem:ub:tailprob} and following the proof in Appendix~\ref{app:ub:proof}, we have an upper bound on the sample complexity for recovering $G$ from its ordering:
\begin{proposition}\label{prop:ub:given_ordering}
For any $F\in\overline{\mathcal{F}}_{d,q}(\beta_{\min},\sigma_{\max}^2,M)$, given a valid topological sort $\tau$ of $\gr$, let $\wh{\gr}(\tau)$ be the DAG returned by the second step of Algorithm~\ref{alg:eqvar} with $\gamma=\overline{\Delta}/2$. If
\[
n \gtrsim \frac{M^5}{\overline{\Delta}}\bigg(q\log \frac{d}{q} + \log \delta \bigg) \, ,
\]
then $\truepr(\wh{\gr}(\tau) = \gr(F)\given \tau) \gtrsim 1 - \delta$.
\end{proposition}
The ``given $\tau$'' in the probability is to emphasize that the estimator has the access to the true ordering $\tau$. 

Unsurprisingly, this approach of using best subset selection with a backwards phase is indeed optimal: We have a matching lower bound (up to constants).
\begin{proposition}\label{prop:lb:pa_given_ordering}
If 
\[
n \le \frac{\sigma_{\max}^2}{8M\beta_{\min}^2}q\log \frac{d}{q}\,,
\]
then given the knowledge of true ordering $\tau$ of DAG $\gr$, for any estimator $\widehat{\gr}$,
\[
\sup_{F\in \overline{\mathcal{F}}_{d,q}(\beta_{\min},\sigma_{\max}^2,M)}\truepr(\wh{\gr}\ne G(F) \given \tau) \ge \frac{1}{2} \, .
\]
\end{proposition}
The proof uses known lower bounds from the sparse support recovery literature \citep{wainwright2009information}; see Appendix~\ref{app:given_ordering} for details. 

This more general optimality result for the second step shows that it is only in the first step (learning parents) that the equal variance assumption is operational. Moreover, although it may be possible to improve the sample complexity of the second step for the smaller class $\mathcal{F}_{d,q}(\beta_{\min},\sigma^2,M)$, since the sample complexity upper bound of the first step of Algorithm~\ref{alg:eqvar} matches the lower bound for recovering the whole graph, such improvements would not change the optimal sample complexity for learning $G$.

\section{Comparison with undirected graphs}
\label{sec:ug}

Our results on DAG learning under equal variances raise an interesting question: \emph{Is learning an equal variance DAG statistically more difficult than learning its corresponding Gaussian graphical model (i.e. inverse covariance matrix)?} This is especially intriguing given the folklore intuition that learning a DAG is more difficult than learning an undirected graph (UG). 
In fact, it is common to learn an undirected graph \emph{first} as a pre-processing step in order to reduce the search space and sample complexity for DAG learning \citep{perrier2008,loh2014causal,buhlmann2014,aragam2019globally}. In this section we explore this question and show that in fact, at least in the special case of equal variance Gaussian models, the sample complexity of both problems is the same.

\subsection{Gaussian graphical models}

First, let us recall some basics about undirected graphical models, also known as Markov random fields (MRFs). When $X\sim\normalN(0,\Sigma)$ as in this paper, an MRF can be read off from the inverse covariance matrix $\Gamma=(\gamma_{jk}):=\Sigma^{-1}$. More precisely, the zero pattern of $\Gamma$ defines an undirected graph $U=U(\Gamma)$ that is automatically an MRF for $X$:
\begin{align*}
U(\Gamma) 
= (V, E(\Gamma)),
\quad
\left\{
\begin{aligned}
V 
&= X, \\
E(\Gamma)
&= \{(j,k) : \gamma_{jk}\ne 0\}.
\end{aligned}
\right.
\end{align*}
Let $\nb(k)=\{\ell\in V : (\ell,k)\in E\}$ be the neighbours of node $k$, i.e. they are connected by some edge. Note the distinction between the parents of $k$ in a directed graph vs. the neighbours of $k$ in an undirected graph. This model is often referred as the Gaussian graphical model (GGM). 

\citet{wang2010information} showed that the optimal sample complexity for learning a GGM is $n\asymp\ggmcomplexity$,
where 
\begin{align}
\label{eq:def:s}
s:=\max_{k}|\nb(k)|
\end{align}
is the degree of $U$ or maximum neighborhood size, and \citet{misra2020information} developed an efficient algorithm that matches this information-theoretic lower bound. 
Given $F\in\mathcal{F}_{d,q}(\beta_{\min},\sigma^2,M)$, let $U(F)$ be the undirected graph induced by the covariance matrix of $F$ (cf. \ref{eq:cov}).
It follows that for any $F\in\mathcal{F}_{d,q}(\beta_{\min},\sigma^2,M)$ we can learn the structure $U(F)$ with $\Theta(\ggmcomplexity)$ samples. Note that this sample complexity scales with $s$ instead of $q$. 

\begin{example}
\label{ex:ug:gap}
Consider the DAGs $G_{1}$ and $G_{2}$ in Figure~\ref{fig:comp}. In $G_{1}$, we have $q=O(d)$ since $T$ has $d$ parents, whereas in $G_{2}$ we have $q=O(1)$ since each $S_{k}$ has only one parent. Thus, we expect that learning $G_{1}$ will require $\Theta(d)$ samples and learning $G_{2}$ will require $\Theta(\log d)$ samples. By comparison, the UGs associated with each model, given by $U_{1}$ and $U_{2}$ have $s=d$, and hence if we use the previous approaches to learn each $U_{k}$ we will need $\Theta(d\log d)$ samples each. Of course, this is to be expected: One should expect that a specialized estimator that exploits the structure of the family $\mathcal{F}_{d,q}(\beta_{\min},\sigma^2,M)$ to perform better.
\end{example}

\begin{figure}[t]
    \centering
    \includegraphics[width=0.7\linewidth]{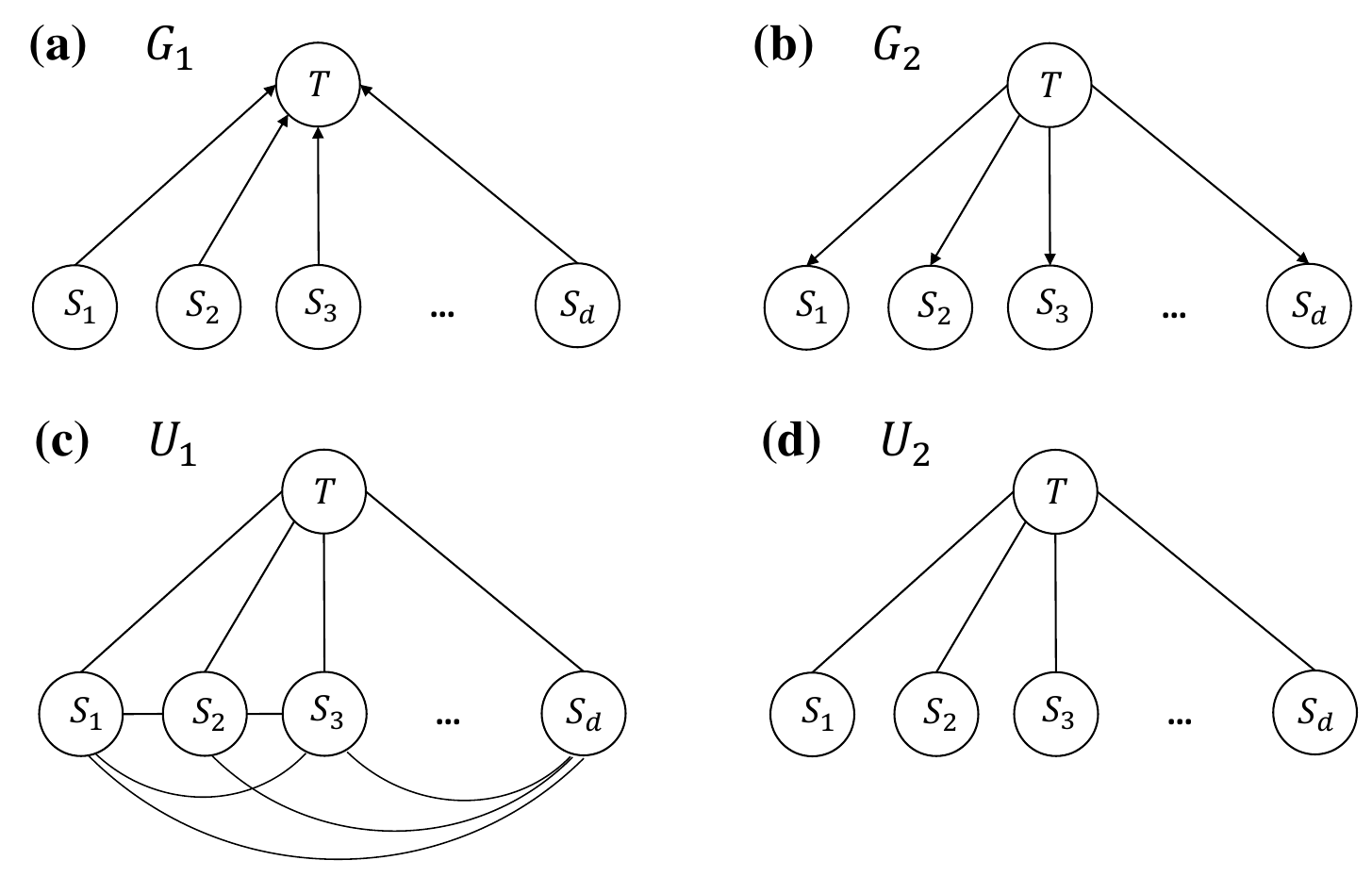}
    \caption{Illustrating examples to compare sample complexities for learning DAG and UG. \textbf{(a) \& (b):} The original DAGs; \textbf{(c) \& (d):} The UGs generated under equal variance condition by DAGs in (a) \& (b) respectively.}
    \label{fig:comp}
\end{figure}

\subsection{Optimal estimation of equal variance GGMs}

Example~\ref{ex:ug:gap} shows that there is a gap between existing ``universal'' algorithms for learning GGMs (i.e. algorithms that do not exploit the equal variance assumption) and the sample complexity for learning equal variance DAGs. A natural question then is: \emph{What is the optimal sample complexity for learning the structure of $U(F)$ for any $F\in\mathcal{F}_{d,q}(\beta_{\min},\sigma^2,M)$?}

We begin by establishing a lower bound that matches the lower bound in Theorem~\ref{thm:lb} (up to constants):
\begin{theorem}\label{thm:ug:lb}
Assume $q\le d/2$. If 
\begin{align*}
    n \le \max\bigg(\frac{2(1-\delta)\log d}{\beta_{\min}^2},
 \frac{(1-2\delta)q\log(d/q)}{M^2-1} \bigg) \, ,
\end{align*}
then for any estimator $\wh{U}$,
\[
\sup_{F\in \mathcal{F}_{d,q}(\beta_{\min},\sigma^2,M)}\truepr(\wh{U}\ne U(F)) \ge \delta - \frac{1}{\log d} \, .
\]
\end{theorem}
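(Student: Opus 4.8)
The plan is to reduce this to the DAG lower bound of Theorem~\ref{thm:lb} by exploiting the fact that, under the equal variance assumption, the map $F \mapsto G(F)$ and the map $F \mapsto U(F)$ carry the same information on suitably chosen ensembles. Concretely, I would reuse the two ensembles from the proof of Theorem~\ref{thm:lb}: the ``counting'' ensemble of sparse DAGs in $\mathcal{G}_{d,q}$ giving the $q\log(d/q)/(M^2-1)$ term, and the ``one-edge'' ensemble giving the $\log d / \beta_{\min}^2$ term. The key observation is that on both of these ensembles, recovering the moralized graph $\mathcal{M}(G)$ (which is exactly $U(F)$, since the GGM induced by $\Sigma = \sigma^2(I-B)^{-T}(I-B)^{-1}$ has support equal to the moral graph — this is the standard fact that the Gaussian Markov blanket structure is the moral graph) is equivalent to recovering $G$ itself. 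For the one-edge ensemble this is immediate: a single directed edge $j\to k$ moralizes to the single undirected edge $\{j,k\}$, and distinct one-edge DAGs give distinct one-edge UGs, so an estimator $\wh U$ that identifies $U(F)$ identifies $G(F)$. For the sparse-DAG counting ensemble, I would pick the DAGs so that moralization is injective on the ensemble — e.g. by using a fixed topological order and choosing parent sets so that no spurious ``moral'' edges coincide across different graphs (the construction borrowed from \citet{santhanam2012information} can be arranged to have this property, since we are free to choose which subset of earlier nodes each node's parents come from).

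The key steps, in order, would be: (1) State and invoke Corollary~\ref{col:lb:fano} (the Fano-type bound) for the UG recovery problem. (2) For the first term, take the ensemble $\{G_i\}$ of DAGs in $\mathcal{G}_{d,q}$ used in Theorem~\ref{thm:lb}, verify that $G \mapsto U(G)$ (equivalently $G\mapsto\mathcal{M}(G)$) is injective on this ensemble so that $\log|\{U(G_i)\}| = \log|\{G_i\}| \gtrsim q\log(d/q)$, and reuse verbatim the KL bound $\KL(P_{G_i}\,\Vert\,P_{G_j}) \lesssim M^2-1$ already established there (the distributions are literally the same, only the ``label'' we try to recover has changed). (3) For the second term, repeat with the one-edge ensemble: $|\{U(G_i)\}| = |\{G_i\}| \asymp d^2$ (or $d$, matching whatever count Theorem~\ref{thm:lb} uses) and $\KL \lesssim \beta_{\min}^2$. (4) Combine the two via the usual ``take the better of the two ensembles'' argument to get the $\max$. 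The constants in the statement ($2(1-\delta)$ versus $(1-2\delta)$ on the two terms) I would track through Fano's inequality exactly as in Theorem~\ref{thm:lb}; the slightly different constant on the $\log d/\beta_{\min}^2$ term presumably reflects that the one-edge ensemble count for UGs is $\binom{d}{2}$ rather than an ordered count, giving a factor-$2$ improvement in the leading constant.

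The main obstacle — and the only genuinely new content beyond Theorem~\ref{thm:lb} — is step (2): verifying that moralization does not collapse the sparse-DAG ensemble, i.e. that $|\{\mathcal{M}(G_i)\}|$ is still $2^{\Omega(q\log(d/q))}$. If the naive ensemble from \citet{santhanam2012information} turns out to have many DAGs with coinciding moral graphs, I would instead thin the ensemble (losing only a constant factor in the exponent, which is absorbed) or modify the construction — for instance, fix a linear order $1 < 2 < \cdots < d$ and let each node in a designated block choose its $q$ parents from disjoint ``address'' blocks of earlier nodes, so that the moral graph determines the parent sets uniquely. Everything else (the KL computations, the Fano plumbing) is identical to the already-proved DAG case and can be cited rather than redone.
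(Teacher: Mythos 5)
Your overall plan is sound and does recover the stated rate, but it is a genuinely different route from the paper's, and the two places where it differs are exactly where your sketch is thinnest. For the $\log d/\beta_{\min}^2$ term, the paper does not reuse the pairwise-KL Fano bound of Corollary~\ref{col:lb:fano}: it invokes the mutual-information form of Fano from \citet{wang2010information} (Lemmas~\ref{lem:comp:fano} and~\ref{lem:comp:entbound}), bounding $I(\theta;X)\le \tfrac{n}{2}R(\widetilde{\mathcal U})$ via the averaged covariance matrix and computing $\log\det\bar\Sigma\asymp\beta_{\min}^2$ with the determinant approximation of Lemma~\ref{lem:comp:det}. This is where the constants $2(1-\delta)$ and the penalty $1/\log d$ in the statement come from --- not, as you guess, from using $\binom{d}{2}$ unordered edges (which changes $\log N$ only by an additive constant). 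Your pairwise-KL route (max KL $=\beta_{\min}^2+\beta_{\min}^4/2$ from the Theorem~\ref{thm:lb} table) gives the same term up to constants, which is fine in spirit but does not literally reproduce the stated threshold. Also, your claim that ``distinct one-edge DAGs give distinct one-edge UGs'' is false --- reversing $u\to v$ gives the same undirected edge, so the map is 2-to-1 --- though this is harmless once you thin to one orientation per pair, as your later remark implicitly does; just be aware that distinctness of the \emph{labels} $U(F)$ across the ensemble is required for the Fano reduction.

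For the $q\log(d/q)/(M^2-1)$ term, you correctly identify the crux (moralization must not collapse the ensemble), but the paper sidesteps it differently: rather than proving injectivity of $G\mapsto\mathcal M(G)$ on the DAG ensemble, it reuses the KL bound $(M^2-1)d$ and lower-bounds the number of distinct achievable UGs directly, citing the count of degree-bounded undirected graphs from \citet{santhanam2012information} together with Lemma~\ref{lem:lohbum} ($U=\mathcal M(G)$ under Condition~\ref{cond:ugmoral}). Your alternative --- constructing an ensemble on which the moral graph determines the parent sets uniquely --- is legitimate and arguably more careful than the paper's terse counting argument (one concrete instantiation: split the nodes into two halves, let each node in the second half draw its $q$ parents from the first half with equal positive coefficients; then the second half has no children, so the cross edges of the moral graph coincide with the skeleton, moralization is injective, there is no cancellation so $U(F)=\mathcal M(G)$, and $\log N\asymp dq\log(d/q)$). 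But this injectivity/counting step is the only genuinely new content beyond Theorem~\ref{thm:lb}, and in your proposal it remains a sketch; carrying it out (and checking the ensemble stays inside $\mathcal F_{d,q}(\beta_{\min},\sigma^2,M)$, in particular the eigenvalue bound $M$) is what would complete the proof.
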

The proof is deferred to Appendix~\ref{app:comp}. 

To derive an upper bound for this problem, we use the well-known trick of moralization; see \citet{lauritzen1996} for details. Since $F\in\mathcal{F}_{d,q}(\beta_{\min},\sigma^2,M)$, we can first learn the DAG $G=G(F)$ via Algorithm~\ref{alg:eqvar}. Given the output $\wh{G}$, we then form the moralized graph and define $\wh{U}:=\mathcal{M}(\wh{G})$. 
See Algorithm~\ref{alg:ug}. 

\begin{algorithm}[t]
\caption{\textsc{LearnUG} algorithm}
\label{alg:ug}
\textbf{Input:} Sample covariance matrix $\wh{\Sigma}=\frac{1}{n}\sum_{i=1}^n X_iX_i\T$, backward phase threshold $\gamma$\\
\textbf{Output:} $\widehat{U}$.
\begin{enumerate}
    \item \textit{Learn DAG}: Let
    $\wh{G} = \textsc{LearnDAG}(\wh{\Sigma},\gamma)$
    \item \textit{Moralization}:
    Set $\wh{U} = \mathcal{M}(\wh{G})$
    \item \textit{Return} $\wh{U}$
\end{enumerate}
\end{algorithm}

This approach is justified by a result due to \citet{loh2014causal}. First, we need the following condition:
\begin{condition}\label{cond:ugmoral}
Let precision matrix $\Gamma=\Sigma^{-1} = [\cov(F)]^{-1}$, $\Gamma_{ij} = 0 \Leftrightarrow \beta_{ij}=0 \AND \beta_{ik}\beta_{jk}=0$ for all $k\ne i,j$.
\end{condition}
When we sample nonzero entries of $B$ from some continuous distribution independently, Condition~\ref{cond:ugmoral} is satisfied except on a set of Lebesgue measure zero. For example, it is easy to check that the examples in Figure~\ref{fig:comp} satisfy this condition. Under this condition, moralization is guaranteed to return $U$:
\begin{lemma}[Theorem~2, \citealp{loh2014causal}]\label{lem:lohbum}
If Condition~\ref{cond:ugmoral} holds, then $U = \mathcal{M}(\gr(F))$.
\end{lemma}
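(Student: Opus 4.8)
The plan is to prove the lemma by computing the precision matrix $\Gamma=\Sigma^{-1}$ entrywise from the parametrization \eqref{eq:cov} and matching its zero pattern against the edge set of $\mathcal{M}(\gr(F))$; this recovers \citet[Theorem~2]{loh2014causal} in the present linear-Gaussian setting.

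First I would unpack both graphs as edge sets on $[d]$. By the definition of moralization, an unordered pair $\{i,j\}$ is an edge of $\mathcal{M}(\gr(F))$ exactly when $i\to j$ or $j\to i$ belongs to $\gr(F)$ — i.e. $\beta_{ij}\ne 0$ or $\beta_{ji}\ne 0$ — or when $i$ and $j$ share a common child $k\notin\{i,j\}$ — i.e. $\beta_{ik}\beta_{jk}\ne 0$ for some such $k$. Likewise $\{i,j\}$ is an edge of $U=\supp(\Gamma)$ exactly when $\Gamma_{ij}\ne 0$.

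Next, by \eqref{eq:cov} we have $\Gamma=\sigma^{-2}(I-B)(I-B)^{\top}$, and expanding this product entrywise while using that $B$ has zero diagonal gives, for $i\ne j$,
\[
\Gamma_{ij} = \sigma^{-2}\Big(-\beta_{ij}-\beta_{ji}+\sum_{k\notin\{i,j\}}\beta_{ik}\beta_{jk}\Big)
\]
(acyclicity of $\gr(F)$ also forces one of $\beta_{ij},\beta_{ji}$ to vanish, though this is not needed). Comparing with the edge characterizations above: if $\{i,j\}\notin\mathcal{M}(\gr(F))$ then every term on the right vanishes, so $\Gamma_{ij}=0$ and $\{i,j\}\notin U$; hence $U\subseteq\mathcal{M}(\gr(F))$ with no hypothesis needed (this is the familiar fact that the moralized graph is an undirected I-map). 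For the reverse inclusion, suppose $\{i,j\}\notin U$, i.e. $\Gamma_{ij}=0$; Condition~\ref{cond:ugmoral} — applied with the roles of $i$ and $j$ also interchanged, which is legitimate since $\Gamma_{ij}=\Gamma_{ji}$ — yields $\beta_{ij}=\beta_{ji}=0$ and $\beta_{ik}\beta_{jk}=0$ for all $k\notin\{i,j\}$, i.e. $\{i,j\}\notin\mathcal{M}(\gr(F))$. Contraposing gives $\mathcal{M}(\gr(F))\subseteq U$, and combining the two inclusions gives $U=\mathcal{M}(\gr(F))$.

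There is no substantive obstacle here beyond bookkeeping: one must keep the transpose conventions straight in passing from $B=(\beta_{jk})$ (edge $j\to k$) to $\Gamma=\sigma^{-2}(I-B)(I-B)^{\top}$, and should observe that Condition~\ref{cond:ugmoral}, although written only in terms of $\beta_{ij}$, is effectively symmetric because $\Gamma$ is. The sole conceptual point is that Condition~\ref{cond:ugmoral} is exactly the ``no accidental cancellation'' requirement preventing $-\beta_{ij}-\beta_{ji}+\sum_{k}\beta_{ik}\beta_{jk}$ from vanishing while some summand is nonzero — which is why, as noted after the condition, it fails only on a Lebesgue-null set of coefficient matrices.
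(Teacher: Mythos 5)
Your proof is correct, and it is worth noting that it does more than the paper does: the paper offers no argument for this lemma at all, simply importing it as Theorem~2 of \citet{loh2014causal}, whereas you give a short self-contained verification in the present equal-variance Gaussian setting. Your computation is the right one: from \eqref{eq:cov}, $\Gamma=\sigma^{-2}(I-B)(I-B)^{\top}$, and since acyclicity forces $\diag(B)=0$, for $i\ne j$ one gets $\Gamma_{ij}=\sigma^{-2}\bigl(-\beta_{ij}-\beta_{ji}+\sum_{k\notin\{i,j\}}\beta_{ik}\beta_{jk}\bigr)$, whose summands correspond exactly to the three ways $\{i,j\}$ can be an edge of $\mathcal{M}(\gr(F))$ (edge $i\to j$, edge $j\to i$, common child $k$). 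The inclusion $U\subseteq\mathcal{M}(\gr(F))$ then holds unconditionally, and Condition~\ref{cond:ugmoral} is used only to exclude cancellations in the reverse direction; your observation that the condition as written is asymmetric in $(i,j)$ but can be applied to the pair $(j,i)$ as well, using $\Gamma_{ij}=\Gamma_{ji}$, to obtain $\beta_{ji}=0$, is exactly the right repair (and, as you note parenthetically, acyclicity already prevents $\beta_{ij}$ and $\beta_{ji}$ from being simultaneously nonzero, so no cancellation between those two terms is possible in any case). What the cited result buys instead is generality: Loh and B\"uhlmann prove the statement for linear SEMs with an arbitrary diagonal noise covariance $\Omega$, i.e.\ $\Gamma=(I-B)\Omega^{-1}(I-B)^{\top}$, of which the equal-variance case used here is the special case $\Omega=\sigma^{2}I$; your argument extends to that setting with only notational changes, and it has the pedagogical advantage of making explicit that Condition~\ref{cond:ugmoral} is precisely a ``no accidental cancellation'' requirement, which is why it fails only on a Lebesgue-null set of coefficient choices.
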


Under Condition~\ref{cond:ugmoral}, we have the following upper bound, which matches the lower bound in Theorem~\ref{thm:ug:lb}:
\begin{theorem}\label{thm:ug:ub}
Assuming Condition~\ref{cond:ugmoral}, for any $F\in\mathcal{F}_{d,q}(\beta_{\min},\sigma^2,M)$ let $\wh{U}$ be the UG returned by Algorithm~\ref{alg:ug} with $\gamma=\Delta/2$. If 
\[
n \gtrsim \frac{M^5}{\Delta}\bigg(q\log \frac{d}{q} + \log \delta \bigg) \, ,
\]
then $\truepr(\wh{U} = U(F)) \gtrsim 1 - \delta$.
\end{theorem}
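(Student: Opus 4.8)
The plan is to reduce Theorem~\ref{thm:ug:ub} to the DAG estimation result Theorem~\ref{thm:ub} together with the moralization identity of Lemma~\ref{lem:lohbum}. The key observation is that Algorithm~\ref{alg:ug} is a pure post-processing of Algorithm~\ref{alg:eqvar}: it runs \textsc{LearnDAG} to obtain $\wh{G}$ and then outputs $\wh{U} = \mathcal{M}(\wh{G})$. Since moralization $\mathcal{M}(\cdot)$ is a deterministic map from DAGs to undirected graphs, the event $\{\wh{G} = \gr(F)\}$ implies $\{\mathcal{M}(\wh{G}) = \mathcal{M}(\gr(F))\}$. Under Condition~\ref{cond:ugmoral}, Lemma~\ref{lem:lohbum} gives $\mathcal{M}(\gr(F)) = U(F)$, so $\{\wh{G} = \gr(F)\} \subseteq \{\wh{U} = U(F)\}$.

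With this containment in hand, the argument is a one-line probability bound. First I would invoke Theorem~\ref{thm:ub}: for any $F \in \mathcal{F}_{d,q}(\beta_{\min},\sigma^2,M)$, running Algorithm~\ref{alg:eqvar} with $\gamma = \Delta/2$ and sample size $n \gtrsim \frac{M^5}{\Delta}\big(q\log\frac{d}{q} + \log\delta\big)$ yields $\truepr(\wh{G} = \gr(F)) \gtrsim 1-\delta$. Then, by the event containment above,
\[
\truepr(\wh{U} = U(F)) \ge \truepr(\wh{G} = \gr(F)) \gtrsim 1 - \delta \, .
\]
Since the sample-size hypothesis in Theorem~\ref{thm:ug:ub} is identical to that in Theorem~\ref{thm:ub}, and since Algorithm~\ref{alg:ug} is invoked with the same $\gamma = \Delta/2$, this immediately gives the claim. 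Taking a supremum over $F \in \mathcal{F}_{d,q}(\beta_{\min},\sigma^2,M)$ is harmless since the bound from Theorem~\ref{thm:ub} is uniform over the class.

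Honestly, there is no real obstacle here — the theorem is essentially a corollary of Theorem~\ref{thm:ub} and Lemma~\ref{lem:lohbum}, and the only thing to be careful about is to state the event-containment step cleanly: namely that $\mathcal{M}$ being a function means correctly recovering the DAG forces correct recovery of its moralized graph, and that Condition~\ref{cond:ugmoral} is exactly what identifies that moralized graph with the target GGM structure $U(F)$. One minor point worth flagging is that Condition~\ref{cond:ugmoral} is assumed as a hypothesis (so it holds for the fixed $F$ under consideration), which is why the deduction goes through for every $F$ in the class; there is nothing probabilistic about that condition, it is a property of the coefficient matrix $B$. So the proof is short: recall the containment of events, apply Theorem~\ref{thm:ub}, and conclude.
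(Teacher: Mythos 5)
Your argument is correct and is exactly the paper's proof: the paper simply notes that the result follows from Theorem~\ref{thm:ub} combined with Lemma~\ref{lem:lohbum}, which is the same reduction (recover the DAG, then use that moralization is deterministic and Condition~\ref{cond:ugmoral} identifies $\mathcal{M}(\gr(F))$ with $U(F)$). Your write-up just makes the event-containment step explicit.
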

The proof is straightforward by Theorem~\ref{thm:ub} and Lemma~\ref{lem:lohbum}. This answers the question proposed at the beginning of this section: \emph{Under the equal variance assumption, learning a DAG is no harder than learning its corresponding UG.}

\begin{remark}
\label{rem:open}
It is an interesting question whether or not similar results hold without Condition~\ref{cond:ugmoral}; i.e. is there a direct estimator of $U$---not based on moralizing a DAG---that matches the sample complexity of learning an equal variance DAG?
\end{remark}

\begin{remark}
To compare Theorem~\ref{thm:ug:lb} with previous work,
\citet{wang2010information} showed the optimal sample complexity is $\Theta(s\log d / \lambda^2)$ where $\lambda^2=\min_{(s,t)\in E}\Gamma_{st}^2/(\Gamma_{ss}\Gamma_{tt})$, which is dominated by $\beta_{\min}^2$ when $\beta_{\min}$ is small.
For equal variance GGMs, our result gives $\Omega(q\log(d/q) / M^2 + \log d / \beta^2_{\min})$.
Under Condition~\ref{cond:ugmoral}, $s$ is always greater than $q$, so our new lower bound is strictly smaller, along with a matching upper bound that shows the $s$ dependence for general GGMs is suboptimal for equal variance GGMs.
Another related work \citep{cai2016estimating} derives lower bounds 
on precision matrix estimation 
under certain matrix norms, which is distinct from the graph recovery problem we consider in this work.
For comparison, put in our setting, their lower bound becomes $\Omega(s^2\log d)$, which again depends on $s$ instead of $q$.
\end{remark}

\section{Experiments}\label{sec:expt}

To illustrate the effectiveness of Algorithm~\ref{alg:eqvar}, we report the results of a simulation study. 
We note that existing variants of Algorithm~\ref{alg:eqvar} have been compared against other approaches such as greedy DAG search (GDS, \citealp{peters2013}), see \citet{chen2018causal} for details.
In our experiments, we controlled the number of parents when generating the DAG, fix all noise variances to be the same, and sample nonzero entries of $\beta_k$'s uniformly from given intervals.

\begin{figure*}[t]
    \centering
    \includegraphics[width=1.\linewidth]{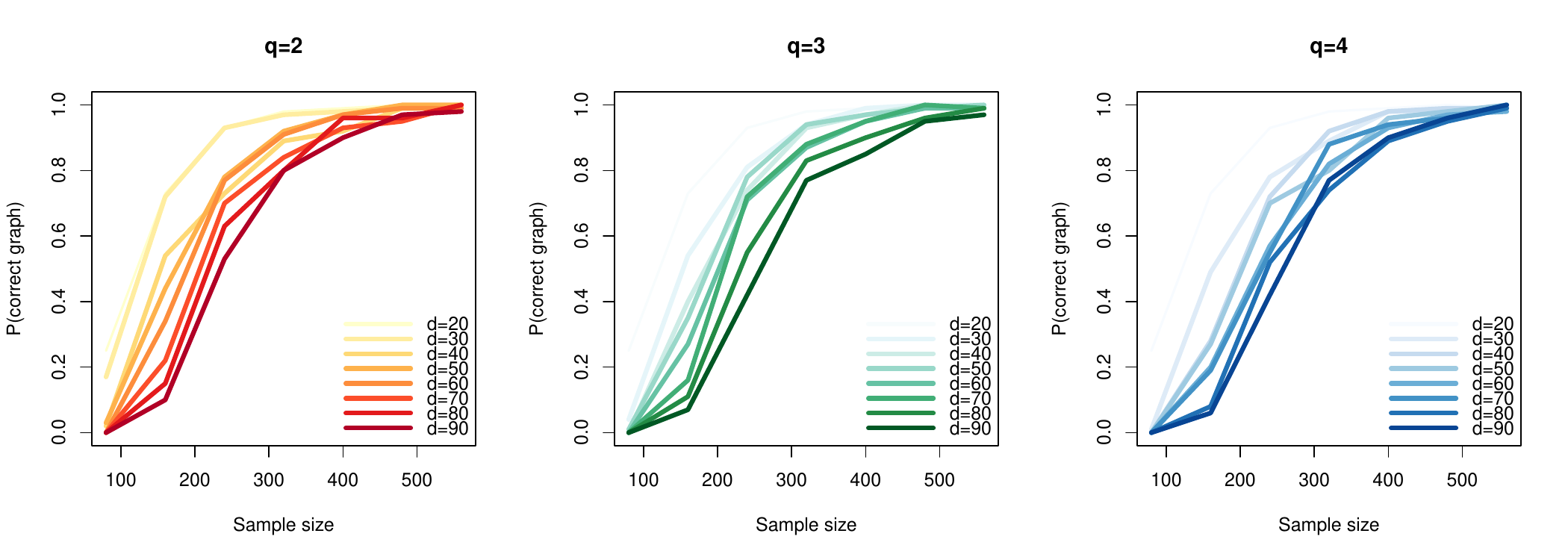}
    \caption{$\truepr(\wh{\gr}=\gr)$ v.s. sample size $n$. Different in-degree $q$ are presented by different colors. The darker shade indicates large number of nodes $d$.}
    \label{fig:main}
\end{figure*}

\subsection{Experiment settings}\label{app:expt:sett}
To generate random DAGs, we first randomly permute $[d]$ to obtain an ordering $\tau$. Then for each $j\in[d]$, we randomly draw a set $S$ of $q$ nodes from $\tau_{[1:j-1]}$ and set 
\begin{align*}
\pa_G(\tau_j) = \begin{cases}
\tau_{[1:j-1]}, & j\le q+1 \\
S, & \text{otherwise}.
\end{cases}
\end{align*}

We then generate random nonzero coefficients according to $\beta_j\sim \Rad\times \UniformDist(0.5,1)$, where $\Rad$ is a Rademacher random variable. Finally, we generate data by the resulting Gaussian linear model:
\[
X_k = \beta_k\T X + \epsilon_k, \ \ \ \ \epsilon_k\sim\mathcal{N}(0,\sigma^2)\ \  \forall k
\]
with $\sigma=0.3$.

We consider graphs with $d \in \{20,30,\ldots,90\}$ nodes and in-degree $q\in\{2,3,4\}$. For each setting, the total number of replications is $N=100$. For each replication, we generate a random graph and a dataset with sample size $n\in\{80,160,\ldots,560\}$. Finally, we report $\#\{\wh{\gr}=\gr\} / N$ to approximate $\truepr(\wh{\gr}=\gr)$.

\subsection{Implementation}\label{app:expt:impl}
We implement the \textit{learning ordering} phase of Algorithm~\ref{alg:eqvar} using code from \citet{chen2018causal}\footnote{The code can be found at \url{https://github.com/WY-Chen/EqVarDAG/blob/master/R/EqVarDAG_HD_TD.R}.}, which inputs the oracle in-degree $q$ and outputs a topological sort. For the \textit{learning parents} phase of Algorihm~\ref{alg:eqvar}, we use the R package \texttt{leaps} \citep{lumley2013package} for Best Subset Selection with BIC. 

The experiments were conducted on an internal cluster using an Intel E5-2680v4 2.4GHz CPU with 64 GB memory.

\subsection{Results}
The results are shown in Figure~\ref{fig:main}. As expected, the probability of successfully recovering true DAG goes to one quickly across different settings of the number of nodes $d$ and maximum in-degree $q$. Since Best Subset Selection is computationally expensive, we are not able to examine higher dimensions systematically. Nonetheless, to test higher dimensional cases, we checked several (random) cases for $d = 200,300,500$, and the results shows with 90\% chance the DAG is successfully recovered for moderate sample sizes $n=480,560,900$.

\section{Conclusion}\label{sec:conc}

In this paper, we derived the optimal sample complexity for learning Gaussian linear DAG models under an equal variance condition that has been extensively studied in the literature. These results extend to subgaussian errors under similar assumptions as well as more general models with unequal variances as long as the DAG remains identifiable by the proposed algorithm, which is easy to implement and simulations corroborate our theoretical findings. 
We also investigated the sub-problem of learning a linear DAG from its ordering and made comparisons with the classical problem of learning GGMs, showing the sample complexity of both problems is the same.

We conclude with some open questions.
Although our algorithm is sample optimal, it is not computationally efficient. As noted in Remark~\ref{rem:lasso}, an easy fix is to use $\ell_{1}$-regularization, however, this would require imposing restrictive incoherence assumptions. It would be interesting to find efficient algorithms without such conditions, along the lines of \citet{misra2020information} for GGMs. We conjecture that there is such a polynomial-time algorithm that achieves the optimal sample complexity bound without such restrictive conditions.
It also is not known whether or not score-based approaches \citep[e.g.][]{geer2013,loh2014causal,nandy2018,aragam2019globally,rajendran2021structure} are sample optimal.
Another interesting question posed in Remark~\ref{rem:open} is whether or not there is a moralization-free algorithm that achieves the optimal sample complexity for learning $U(F)$ established in Theorem~\ref{thm:ug:lb}. This would allow Condition~\ref{cond:ugmoral} to be relaxed or removed entirely.

Finally, it would be of interest to generalize the results in Section~\ref{sec:ug} to more general families, i.e. beyond equal variances and its generalizations \citep{ghoshal2017sem}. This would require the derivation of new identifiability conditions, as in \citet{gao2021efficient} and \citet{rajendran2021structure}.

\bibliography{gsopmdag-foo}

\begin{thebibliography}{48}
\providecommand{\natexlab}[1]{#1}
\providecommand{\url}[1]{\texttt{#1}}
\expandafter\ifx\csname urlstyle\endcsname\relax
  \providecommand{\doi}[1]{doi: #1}\else
  \providecommand{\doi}{doi: \begingroup \urlstyle{rm}\Url}\fi

\bibitem[Aragam et~al.(2015)Aragam, Amini, and Zhou]{aragam2015highdimdag}
B.~Aragam, A.~A. Amini, and Q.~Zhou.
\newblock Learning directed acyclic graphs with penalized neighbourhood
  regression.
\newblock \emph{arXiv:1511.08963}, 2015.

\bibitem[Aragam et~al.(2019)Aragam, Amini, and Zhou]{aragam2019globally}
B.~Aragam, A.~Amini, and Q.~Zhou.
\newblock {Globally optimal score-based learning of directed acyclic graphs in
  high-dimensions}.
\newblock In \emph{Advances in Neural Information Processing Systems 32}, pages
  4450--4462. 2019.

\bibitem[Banerjee et~al.(2008)Banerjee, El~Ghaoui, and
  d'Aspremont]{banerjee2008}
O.~Banerjee, L.~El~Ghaoui, and A.~d'Aspremont.
\newblock Model selection through sparse maximum likelihood estimation for
  multivariate {G}aussian or binary data.
\newblock \emph{Journal of Machine Learning Research}, 9:\penalty0 485--516,
  2008.

\bibitem[B{\"u}hlmann et~al.(2014)B{\"u}hlmann, Peters, and
  Ernest]{buhlmann2014}
P.~B{\"u}hlmann, J.~Peters, and J.~Ernest.
\newblock {CAM}: Causal additive models, high-dimensional order search and
  penalized regression.
\newblock \emph{Annals of Statistics}, 42\penalty0 (6):\penalty0 2526--2556,
  2014.

\bibitem[Cai et~al.(2016)Cai, Liu, and Zhou]{cai2016estimating}
T.~T. Cai, W.~Liu, and H.~H. Zhou.
\newblock Estimating sparse precision matrix: Optimal rates of convergence and
  adaptive estimation.
\newblock \emph{The Annals of Statistics}, 44\penalty0 (2):\penalty0 455--488,
  2016.

\bibitem[Chen et~al.(2019)Chen, Drton, and Wang]{chen2018causal}
W.~Chen, M.~Drton, and Y.~S. Wang.
\newblock {On causal discovery with an equal-variance assumption}.
\newblock \emph{Biometrika}, 106\penalty0 (4):\penalty0 973--980, 09 2019.
\newblock ISSN 0006-3444.
\newblock \doi{10.1093/biomet/asz049}.

\bibitem[Chickering(1996)]{chickering1996}
D.~M. Chickering.
\newblock Learning {B}ayesian networks is {NP}-complete.
\newblock In \emph{Learning from data}, pages 121--130. Springer, 1996.

\bibitem[Chickering et~al.(2004)Chickering, Heckerman, and
  Meek]{chickering2004}
D.~M. Chickering, D.~Heckerman, and C.~Meek.
\newblock Large-sample learning of {B}ayesian networks is {NP}-hard.
\newblock \emph{Journal of Machine Learning Research}, 5:\penalty0 1287--1330,
  2004.

\bibitem[Fan and Li(2001)]{fan2001}
J.~Fan and R.~Li.
\newblock Variable selection via nonconcave penalized likelihood and its oracle
  properties.
\newblock \emph{Journal of the American Statistical Association}, 96\penalty0
  (456):\penalty0 1348--1360, 2001.

\bibitem[Friedman and Yakhini(1996)]{friedman1996}
N.~Friedman and Z.~Yakhini.
\newblock On the sample complexity of learning bayesian networks.
\newblock In \emph{Uncertainty in Artifical Intelligence (UAI)}, 02 1996.

\bibitem[Gao and Aragam(2021)]{gao2021efficient}
M.~Gao and B.~Aragam.
\newblock Efficient bayesian network structure learning via local markov
  boundary search.
\newblock \emph{Advances in Neural Information Processing Systems}, 34, 2021.

\bibitem[Gao et~al.(2020)Gao, Ding, and Aragam]{gao2020npvar}
M.~Gao, Y.~Ding, and B.~Aragam.
\newblock A polynomial-time algorithm for learning nonparametric causal graphs.
\newblock \emph{Advances in Neural Information Processing Systems}, 33, 2020.

\bibitem[Ghoshal and Honorio(2017{\natexlab{a}})]{ghoshal2017ident}
A.~Ghoshal and J.~Honorio.
\newblock Learning identifiable gaussian bayesian networks in polynomial time
  and sample complexity.
\newblock In \emph{Advances in Neural Information Processing Systems 30}, pages
  6457--6466. 2017{\natexlab{a}}.

\bibitem[Ghoshal and Honorio(2017{\natexlab{b}})]{ghoshal2017limits}
A.~Ghoshal and J.~Honorio.
\newblock {Information-theoretic limits of Bayesian network structure
  learning}.
\newblock In A.~Singh and J.~Zhu, editors, \emph{Proceedings of the 20th
  International Conference on Artificial Intelligence and Statistics},
  volume~54 of \emph{Proceedings of Machine Learning Research}, pages 767--775,
  Fort Lauderdale, FL, USA, 20--22 Apr 2017{\natexlab{b}}. PMLR.

\bibitem[Ghoshal and Honorio(2018)]{ghoshal2017sem}
A.~Ghoshal and J.~Honorio.
\newblock Learning linear structural equation models in polynomial time and
  sample complexity.
\newblock In A.~Storkey and F.~Perez-Cruz, editors, \emph{Proceedings of the
  Twenty-First International Conference on Artificial Intelligence and
  Statistics}, volume~84 of \emph{Proceedings of Machine Learning Research},
  pages 1466--1475, Playa Blanca, Lanzarote, Canary Islands, 09--11 Apr 2018.
  PMLR.

\bibitem[Hoyer et~al.(2009)Hoyer, Janzing, Mooij, Peters, and
  Sch{\"o}lkopf]{hoyer2009}
P.~O. Hoyer, D.~Janzing, J.~M. Mooij, J.~Peters, and B.~Sch{\"o}lkopf.
\newblock Nonlinear causal discovery with additive noise models.
\newblock In \emph{Advances in neural information processing systems}, pages
  689--696, 2009.

\bibitem[Kalisch and B{\"u}hlmann(2007)]{kalisch2007}
M.~Kalisch and P.~B{\"u}hlmann.
\newblock Estimating high-dimensional directed acyclic graphs with the
  {PC}-algorithm.
\newblock \emph{Journal of Machine Learning Research}, 8:\penalty0 613--636,
  2007.

\bibitem[Lauritzen(1996)]{lauritzen1996}
S.~L. Lauritzen.
\newblock \emph{Graphical models}.
\newblock Oxford University Press, 1996.

\bibitem[Loh and B{\"u}hlmann(2014)]{loh2014causal}
P.-L. Loh and P.~B{\"u}hlmann.
\newblock High-dimensional learning of linear causal networks via inverse
  covariance estimation.
\newblock \emph{Journal of Machine Learning Research}, 15:\penalty0 3065--3105,
  2014.

\bibitem[Loh and Wainwright(2014)]{loh2014nonconvex}
P.-L. Loh and M.~J. Wainwright.
\newblock Support recovery without incoherence: A case for nonconvex
  regularization.
\newblock \emph{arXiv preprint arXiv:1412.5632}, 2014.

\bibitem[Lumley and Lumley(2013)]{lumley2013package}
T.~Lumley and M.~T. Lumley.
\newblock Package ‘leaps’.
\newblock \emph{Regression subset selection. Thomas Lumley Based on Fortran
  Code by Alan Miller. Available online: http://CRAN. R-project. org/package=
  leaps (Accessed on 18 March 2018)}, 2013.

\bibitem[Meinshausen and B{\"u}hlmann(2006)]{meinshausen2006}
N.~Meinshausen and P.~B{\"u}hlmann.
\newblock High-dimensional graphs and variable selection with the {L}asso.
\newblock \emph{Annals of Statistics}, 34\penalty0 (3):\penalty0 1436--1462,
  2006.

\bibitem[Misra et~al.(2020)Misra, Vuffray, and Lokhov]{misra2020information}
S.~Misra, M.~Vuffray, and A.~Y. Lokhov.
\newblock Information theoretic optimal learning of gaussian graphical models.
\newblock In \emph{Conference on Learning Theory}, pages 2888--2909. PMLR,
  2020.

\bibitem[Nandy et~al.(2018)Nandy, Hauser, and Maathuis]{nandy2018}
P.~Nandy, A.~Hauser, and M.~H. Maathuis.
\newblock High-dimensional consistency in score-based and hybrid structure
  learning.
\newblock \emph{The Annals of Statistics}, 46\penalty0 (6A):\penalty0
  3151--3183, 2018.

\bibitem[Nowzohour and B{\"u}hlmann(2016)]{nowzohour2016}
C.~Nowzohour and P.~B{\"u}hlmann.
\newblock Score-based causal learning in additive noise models.
\newblock \emph{Statistics}, 50\penalty0 (3):\penalty0 471--485, 2016.

\bibitem[Park(2018)]{park2018learning}
G.~Park.
\newblock Learning generalized hypergeometric distribution (ghd) dag models.
\newblock \emph{arXiv preprint arXiv:1805.02848}, 2018.

\bibitem[Park(2020)]{park2020condvar}
G.~Park.
\newblock Identifiability of additive noise models using conditional variances.
\newblock \emph{Journal of Machine Learning Research}, 21\penalty0
  (75):\penalty0 1--34, 2020.

\bibitem[Park and Park(2019)]{park2019poisson}
G.~Park and S.~Park.
\newblock High-dimensional poisson structural equation model learning via
  $\ell_1$-regularized regression.
\newblock \emph{Journal of Machine Learning Research}, 20\penalty0
  (95):\penalty0 1--41, 2019.

\bibitem[Park and Raskutti(2017)]{park2017}
G.~Park and G.~Raskutti.
\newblock {Learning quadratic variance function (QVF) dag models via
  overdispersion scoring (ODS)}.
\newblock \emph{The Journal of Machine Learning Research}, 18\penalty0
  (1):\penalty0 8300--8342, 2017.

\bibitem[Perrier et~al.(2008)Perrier, Imoto, and Miyano]{perrier2008}
E.~Perrier, S.~Imoto, and S.~Miyano.
\newblock Finding optimal bayesian network given a super-structure.
\newblock \emph{Journal of Machine Learning Research}, 9\penalty0
  (Oct):\penalty0 2251--2286, 2008.

\bibitem[Peters and B\"uhlmann(2013)]{peters2013}
J.~Peters and P.~B\"uhlmann.
\newblock Identifiability of {G}aussian structural equation models with equal
  error variances.
\newblock \emph{Biometrika}, 101\penalty0 (1):\penalty0 219--228, 2013.

\bibitem[Peters et~al.(2014)Peters, Mooij, Janzing, and
  Sch{\"o}lkopf]{peters2014}
J.~Peters, J.~M. Mooij, D.~Janzing, and B.~Sch{\"o}lkopf.
\newblock Causal discovery with continuous additive noise models.
\newblock \emph{Journal of Machine Learning Research}, 15\penalty0
  (1):\penalty0 2009--2053, 2014.

\bibitem[Rajendran et~al.(2021)Rajendran, Kivva, Gao, and
  Aragam]{rajendran2021structure}
G.~Rajendran, B.~Kivva, M.~Gao, and B.~Aragam.
\newblock Structure learning in polynomial time: Greedy algorithms, bregman
  information, and exponential families.
\newblock \emph{Advances in Neural Information Processing Systems}, 34, 2021.

\bibitem[Ravikumar et~al.(2010)Ravikumar, Wainwright, and
  Lafferty]{ravikumar2010}
P.~Ravikumar, M.~J. Wainwright, and J.~D. Lafferty.
\newblock High-dimensional ising model selection using $\ell_1$-regularized
  logistic regression.
\newblock \emph{Annals of Statistics}, 38\penalty0 (3):\penalty0 1287--1319,
  2010.

\bibitem[Rothenh{\"a}usler et~al.(2018)Rothenh{\"a}usler, Ernest, B{\"u}hlmann,
  et~al.]{rothenhausler2018causal}
D.~Rothenh{\"a}usler, J.~Ernest, P.~B{\"u}hlmann, et~al.
\newblock Causal inference in partially linear structural equation models.
\newblock \emph{The Annals of Statistics}, 46\penalty0 (6A):\penalty0
  2904--2938, 2018.

\bibitem[Santhanam and Wainwright(2012)]{santhanam2012information}
N.~P. Santhanam and M.~J. Wainwright.
\newblock Information-theoretic limits of selecting binary graphical models in
  high dimensions.
\newblock \emph{IEEE Transactions on Information Theory}, 58\penalty0
  (7):\penalty0 4117--4134, 2012.

\bibitem[Srebro(2003)]{srebro2003maximum}
N.~Srebro.
\newblock Maximum likelihood bounded tree-width markov networks.
\newblock \emph{Artificial intelligence}, 143\penalty0 (1):\penalty0 123--138,
  2003.

\bibitem[van~de Geer and B{\"u}hlmann(2013)]{geer2013}
S.~van~de Geer and P.~B{\"u}hlmann.
\newblock $\ell_0$-penalized maximum likelihood for sparse directed acyclic
  graphs.
\newblock \emph{Annals of Statistics}, 41\penalty0 (2):\penalty0 536--567,
  2013.

\bibitem[Vuffray et~al.(2016)Vuffray, Misra, Lokhov, and
  Chertkov]{vuffray2016interaction}
M.~Vuffray, S.~Misra, A.~Y. Lokhov, and M.~Chertkov.
\newblock Interaction screening: Efficient and sample-optimal learning of ising
  models.
\newblock \emph{arXiv preprint arXiv:1605.07252}, 2016.

\bibitem[Wainwright(2009{\natexlab{a}})]{wainwright2009}
M.~J. Wainwright.
\newblock Sharp thresholds for high-dimensional and noisy sparsity recovery
  using-constrained quadratic programming ({L}asso).
\newblock \emph{Information Theory, IEEE Transactions on}, 55\penalty0
  (5):\penalty0 2183--2202, 2009{\natexlab{a}}.

\bibitem[Wainwright(2009{\natexlab{b}})]{wainwright2009information}
M.~J. Wainwright.
\newblock Information-theoretic limits on sparsity recovery in the
  high-dimensional and noisy setting.
\newblock \emph{IEEE transactions on information theory}, 55\penalty0
  (12):\penalty0 5728--5741, 2009{\natexlab{b}}.

\bibitem[Wainwright(2019)]{wainwright2019high}
M.~J. Wainwright.
\newblock \emph{High-dimensional statistics: A non-asymptotic viewpoint},
  volume~48.
\newblock Cambridge University Press, 2019.

\bibitem[Wang et~al.(2010)Wang, Wainwright, and
  Ramchandran]{wang2010information}
W.~Wang, M.~J. Wainwright, and K.~Ramchandran.
\newblock Information-theoretic bounds on model selection for gaussian markov
  random fields.
\newblock In \emph{2010 IEEE International Symposium on Information Theory},
  pages 1373--1377. IEEE, 2010.

\bibitem[Wang and Drton(2020)]{wang2018nongauss}
Y.~S. Wang and M.~Drton.
\newblock High-dimensional causal discovery under non-gaussianity.
\newblock \emph{Biometrika}, 107\penalty0 (1):\penalty0 41--59, 2020.

\bibitem[Yu(1997)]{yu1997assouad}
B.~Yu.
\newblock Assouad, fano, and le cam.
\newblock In \emph{Festschrift for Lucien Le Cam}, pages 423--435. Springer,
  1997.

\bibitem[Zhang(2010)]{zhang2010}
C.-H. Zhang.
\newblock Nearly unbiased variable selection under minimax concave penalty.
\newblock \emph{Annals of Statistics}, 38\penalty0 (2):\penalty0 894--942,
  2010.

\bibitem[Zhang and Hyv{\"a}rinen(2009)]{zhang2009}
K.~Zhang and A.~Hyv{\"a}rinen.
\newblock On the identifiability of the post-nonlinear causal model.
\newblock In \emph{Proceedings of the twenty-fifth conference on uncertainty in
  artificial intelligence}, pages 647--655. AUAI Press, 2009.

\bibitem[Zuk et~al.(2006)Zuk, Margel, and Domany]{zuk2006number}
O.~Zuk, S.~Margel, and E.~Domany.
\newblock On the number of samples needed to learn the correct structure of a
  bayesian network.
\newblock In \emph{Proceedings of the Twenty-Second Conference on Uncertainty
  in Artificial Intelligence}, pages 560--567, 2006.

\end{thebibliography}
\bibliographystyle{abbrvnat}

\appendix

\section{Proof of upper bound}\label{app:ub}

\subsection{Preliminaries}\label{app:ub:prel}
We first show that if all conditional variances are estimated sufficiently well, then Algorithm~\ref{alg:eqvar} is able to identify the true DAG.
\begin{lemma}\label{lem:ub:est}
If for all $k\in V$ and $C\subset V\setminus \{k\}$, $|C|\le q$,
\[
|\var(X_k\given X_C) - v_{kC}|\le \Delta/4
\]
then $\wh{\gr}=\gr$.
\end{lemma}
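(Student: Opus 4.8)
\emph{Proof strategy.} The plan is to trace through the two phases of Algorithm~\ref{alg:eqvar} and check that, on the event that $|\var(X_k\given X_C)-v_{kC}|\le\Delta/4$ for all $k$ and all $C\subseteq V\setminus\{k\}$ with $|C|\le q$, every greedy choice the algorithm makes coincides with the ground truth $\gr$. The fact that does all the work is the following quantitative form of the equal-variance identifiability property: \emph{if $W\subseteq V$ is closed under taking ancestors, $k\notin W$, and $\pa(k)\not\subseteq W$, then $\var(X_k\given X_C)\ge\sigma^2+\Delta$ for every $C\subseteq W$.} I would prove this by augmenting $C$ to $\anc:=C\cup(\pa(k)\cap W)$: since $W$ is ancestrally closed with $k\notin W$ we have $W\subseteq\nd(k)$, and $\de(p)\cap W=\emptyset$ for each $p\in\pa(k)\setminus W$; one then checks $\pa(k)\setminus\anc=\pa(k)\setminus W\ne\emptyset$ and $\anc\cap\de(p)=\emptyset$ for every such $p$, so $\anc$ meets the three constraints in the definition of $\Delta$ and hence $\var(X_k\given X_\anc)\ge\sigma^2+\Delta$; since $C\subseteq\anc$, conditioning on the smaller set cannot decrease the residual variance, giving the claim. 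The complementary fact $\var(X_k\given X_C)=\sigma^2$ whenever $\pa(k)\subseteq C\subseteq\nd(k)$ is immediate from the local Markov property for \eqref{eq:lin:model}; recall also $\Delta=\beta_{\min}^2\sigma^2>0$ by Lemma~\ref{lem:Delta}.

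\emph{Learning Ordering.} I would argue by induction on $j$ that $\wh\tau_{[1:j-1]}$ is ancestrally closed, so that at step $j$ picking $\argmin_k\sigma_k$ extracts a source of $\gr[V\setminus\wh\tau_{[1:j-1]}]$. For $k\notin\wh\tau_{[1:j-1]}$: if $k$ is a source of $\gr[V\setminus\wh\tau_{[1:j-1]}]$ then $\pa(k)\subseteq\wh\tau_{[1:j-1]}$ with $|\pa(k)|\le q$, so taking $C=\pa(k)$ gives $\sigma_k\le v_{k,\pa(k)}\le\sigma^2+\Delta/4$; if $k$ is not a source then $\pa(k)\not\subseteq\wh\tau_{[1:j-1]}$, so the gap property applies to every admissible $C\subseteq\wh\tau_{[1:j-1]}$ and $\sigma_k\ge(\sigma^2+\Delta)-\Delta/4=\sigma^2+\tfrac34\Delta$. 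Since $\tfrac34\Delta>\tfrac14\Delta$, $\argmin_k\sigma_k$ is a source of the current subgraph, and appending a source preserves ancestral closure; after $d$ steps $\wh\tau$ is a full topological order of $\gr$.

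\emph{Learning Parents.} Fix $k=\wh\tau_j$ and put $W=\wh\tau_{[1:j-1]}$, which now contains $\pa(k)$. First one shows $\pa(k)\subseteq C_j$: $v_{k,\pa(k)}\le\sigma^2+\Delta/4$, whereas any admissible $C$ with $\pa(k)\not\subseteq C$ ought to obey $v_{kC}\ge\sigma^2+\tfrac34\Delta$, so the minimiser $C_j$ must contain every parent; then $\var(X_k\given X_{C_j})=\sigma^2$ and $v_{kC_j}\in[\sigma^2-\Delta/4,\sigma^2+\Delta/4]$. For the backward phase: if $i\in C_j\setminus\pa(k)$ then $\pa(k)\subseteq C_j\setminus i\subseteq\nd(k)$, so $\var(X_k\given X_{C_j\setminus i})=\sigma^2$, hence $|v_{kC_j}-v_{k,C_j\setminus i}|\le\Delta/2=\gamma$ and $i$ is dropped; if $i\in\pa(k)$ then $C_j\setminus i$ omits the parent $i$, so (again by the bound above) $\var(X_k\given X_{C_j\setminus i})\ge\sigma^2+\Delta$, hence $|v_{kC_j}-v_{k,C_j\setminus i}|\ge\Delta-\Delta/2=\gamma$ and $i$ is retained (the equality case being harmless). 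Therefore $\pa_{\wh\gr}(k)=\pa(k)$ for all $k$, i.e. $\wh\gr=\gr$.

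The step I expect to be the real obstacle is the bound ``$\var(X_k\given X_C)\ge\sigma^2+\Delta$ whenever $C\subseteq W$ omits some parent of $k$,'' invoked both to force $\pa(k)\subseteq C_j$ and to retain true parents during pruning. When $\pa(k)\not\subseteq W$ the augmentation $\anc=C\cup(\pa(k)\cap W)$ settles it, but in the parents phase $\pa(k)\subseteq W$, and that augmentation reintroduces exactly the omitted parent; so one must instead certify directly that $C$ (or a superset of $C$ that still omits the relevant parent) satisfies the ancestral restriction $\anc\subseteq\nd(\pa(k)\setminus\anc)$ appearing in the definition of $\Delta$. This is precisely why $\Delta$ is defined through those restricted conditioning sets rather than through $\min_{i\in\pa(k)}\var(X_k\given X_{\pa(k)\setminus i})-\sigma^2$, and pinning down this inequality — equivalently, controlling the situation where a true parent is nearly redundant given the others — is where the argument has to be made carefully and is the crux of the lemma.
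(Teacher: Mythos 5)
Your \emph{Learning Ordering} argument is correct, and in fact more careful than the paper's own: the paper simply asserts that every non-source $\ell$ of $\gr[V\setminus\wh\tau_{[1:j-1]}]$ satisfies $\min_{C\subseteq\wh\tau_{[1:j-1]},|C|\le q}\var(X_\ell\given C)>\sigma^2+\Delta$, whereas your ancestral-closure induction together with the augmentation $\anc=C\cup(\pa(k)\cap W)$ is exactly what is needed to reconcile that assertion with the restriction $\anc\subseteq\nd(\pa(k)\setminus\anc)$ in the definition of $\Delta$. The problem is the \emph{Learning Parents} phase, and you have diagnosed it yourself: both the claim that the BSS minimiser $C_j$ must contain $\pa(k)$ and the claim that deleting a true parent from $C_j$ shifts the population conditional variance by more than $\Delta$ rest on the bound $\var(X_k\given X_C)\ge\sigma^2+\Delta$ for sets $C\subseteq\wh\tau_{[1:j-1]}$ omitting a parent of $k$, in the regime where \emph{all} parents of $k$ already lie in $\wh\tau_{[1:j-1]}$. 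You only say such $C$ ``ought to obey'' this and leave it open, so as written the proposal is incomplete at precisely the step that carries the second phase.

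Moreover the gap is genuine, not presentational: that bound does not follow from the definition of $\Delta$, because the constraint $\anc\subseteq\nd(\pa(k)\setminus\anc)$ excludes exactly the troublesome sets --- those containing a descendant of the omitted parent --- and for such sets the population gap can fall strictly below $\Delta$. Concretely, take $p\to c$, $c\to k$, $p\to k$ with coefficient $2$ on $p\to c$ and $1$ on the other two edges (pad with isolated nodes so that $q\le d/2$). Then $\beta_{\min}=1$ and $\Delta=\sigma^2$ by Lemma~\ref{lem:Delta}, yet $\var(X_k\given X_c)-\sigma^2=\sigma^2/5<\Delta/2$: the set $\{c\}$ omits the parent $p$ but contains its child, and is excluded from the minimum defining $\Delta$. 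Hence, even with estimation errors far smaller than $\Delta/4$, the backward rule with $\gamma=\Delta/2$ would delete the true parent $p$ from $C_j=\{p,c\}$, and with more than $q$ predecessors BSS can likewise prefer a size-$q$ set omitting $p$. For what it is worth, the paper's proof does not close this either --- it asserts $\var(X_j\given C_j)>\sigma^2+\Delta$ and $\var(X_j\given C_j\setminus k)-\var(X_j\given C_j)>\Delta$ for arbitrary predecessor subsets, which is the same unproved statement you flagged --- so you have correctly located the crux; but turning your proposal into a proof requires replacing $\Delta$ in this phase by a quantity that controls parent redundancy given co-parents (something of order $\beta_{\min}^2\lambda_{\min}(\Sigma)\ge\beta_{\min}^2/M$) and adjusting the thresholds, not certifying the ancestral restriction for $C$ itself. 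A minor additional point: since the algorithm removes $i$ whenever $|v_{\wh\tau_jC_j}-v_{\wh\tau_jC_j\setminus i}|\le\gamma$, your retention bound must be strict; ``$\ge\gamma$ with the equality case harmless'' is not sufficient.
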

\begin{proof}
We start by showing that $\wh{\tau}$ is a valid ordering for $\gr$, which is equivalent to saying $\wh{\tau}_j$ is a source node of the subgraph $\gr[V\setminus \wh{\tau}_{[1:j-1]}]$ for all $j$. We proceed by induction. For $j=1$, it reduces to compare marginal variances. 
\[
\begin{cases}
\var(X_k) = \sigma^2 & k \text{ is a source node} \\
\var(X_\ell) > \sigma^2 + \Delta & \ell \text{ is not a source node} 
\end{cases}
\]
For any non-source node $\ell$ and any source node $k$,
\begin{align*}
    \sigma_\ell &= v_{\ell \emptyset} \\
    &\ge \var(X_\ell) - \Delta/4 \\
    & > \Delta \times \frac{3}{4} + \sigma^2 \\
    & > \Delta /4  + \sigma^2 \\
    & =  \var(X_k) + \Delta/4 \\
    & \ge v_{k\emptyset} = \sigma_k
\end{align*}
Thus $k$ is preferred over $\ell$. Given that $\wh{\tau}_{[1:j-1]}$ correctly identified, by the equal variance assumption,
\begin{align*}
    & \min_{C\subseteq \wh{\tau}_{[1:j-1]}, |C|\le q}\var(X_k\given C) = \var(X_k\given \pa(k)) = \sigma^2  \\
    & \ \ \ \ \text{ if } k \text{ is a source node of } \gr[V\setminus \wh{\tau}_{[1:j-1]}] \\
    & \min_{C\subseteq \wh{\tau}_{[1:j-1]}, |C|\le q}\var(X_\ell\given C) > \Delta +  \sigma^2  \\
    & \ \ \ \  \text{ if }  \ell \text{ is not a source node of } \gr[V\setminus \wh{\tau}_{[1:j-1]}] 
\end{align*} 
Therefore, for any $\ell$ that is not a source node and for any $k$ that is a source node,
\begin{align*}
    \sigma_\ell &= \min_{C\subseteq \wh{\tau}_{[1:j-1]}, |C|\le q} v_{\ell C} \\
    &\ge \min_{C\subseteq \wh{\tau}_{[1:j-1]}, |C|\le q}\var(X_\ell\given C) - \Delta/4 \\
    & > \frac{3}{4}\Delta  + \sigma^2 \\
    & > \Delta /4  + \sigma^2 \\
    & = \min_{C\subseteq \wh{\tau}_{[1:j-1]}, |C|\le q} \var(X_k\given X_{C}) + \Delta/4 \\
    & \ge \min_{C\subseteq \wh{\tau}_{[1:j-1]}, |C|\le q} v_{kC} = \sigma_k
\end{align*}
Thus the first step of Algorithm~\ref{alg:eqvar} will always include $k$ instead of $\ell$ into $\wh{\tau}$. This implies that $\wh{\tau}$ is a valid topological ordering.

Now we look at the second step of Algorithm~\ref{alg:eqvar}, this step is to remove false parents from candidate set returned by Best Subset Selection. For any $j$, let $\wh{\tau}_j=j$ for ease of notation. Given that $\wh{\tau}$ is a valid ordering, $\pa(j)\subseteq \wh{\tau}_{[1:j-1]}$. We first conclude $\pa(j)\subseteq C_j$, otherwise there exists $C_j'\subseteq \wh{\tau}_{[1:j-1]}$ with $\pa(j)\subseteq C_j'$ such that
\begin{align*}
    v_{jC_j} & \ge \var(X_j\given C_j) - \Delta/4 \\
    & > \sigma^2 + \Delta - \Delta/4 \\
    & > \sigma^2 + \Delta/4 \\
    & = \var(X_j\given C_j') + \Delta/4\\
    & \ge v_{jC_j'}
\end{align*}
Then $C_j$ should not lead to minimum. Then for any $k\in \pa(j)$ and any $\ell\in C_j\setminus \pa(j)$,
\begin{align*}
    v_{jC_j} - v_{jC_j\setminus k} & \ge \var(X_j\given C_j) - \var(X_j\given C_j\setminus k) - \Delta /2  \\
    & > \Delta/2 = \gamma \\
    v_{jC_j} - v_{jC_j\setminus \ell} & \le \var(X_j\given C_j) - \var(X_j\given C_j\setminus \ell) + \Delta /2  \\
    & = \Delta/2 = \gamma
\end{align*}
Thus $\ell$ will be removed while $k$ will stay. Then $\pa_{\wh{\gr}}(j) = \pa_\gr(j)$ for all $j$.
\end{proof}
Next we bound the estimation error tail probability:
\begin{lemma}\label{lem:ub:tailprob}
For all $k\in V$ and $C\subset V\setminus \{k\}$, $|C|\le q$,
\[
\truepr(|\var(X_k\given X_C) - v_{kC}|\ge \epsilon)  \le A_1\exp(-A_2n\epsilon/ M^5 + q)
\]
for some constants $A_1,A_2$.
\end{lemma}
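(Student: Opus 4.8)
The plan is to control the estimation error of the conditional variance $v_{kC} = \wh{\Sigma}_{kk} - \wh{\Sigma}_{kC}\wh{\Sigma}_{CC}^{-1}\wh{\Sigma}_{Ck}$ by relating it to the corresponding population quantity $\var(X_k\given X_C) = \Sigma_{kk} - \Sigma_{kC}\Sigma_{CC}^{-1}\Sigma_{Ck}$ and then bounding the deviation of the relevant subblocks of $\wh{\Sigma}$ from those of $\Sigma$. The starting point is the elementary fact that the Schur complement is a smooth function of the entries of the matrix on the region where the lower-right block is well-conditioned. Concretely, writing $\Sigma_{S} = \Sigma_{SS}$ for $S = C\cup\{k\}$ (a submatrix of size at most $q+1$), the map $\Sigma_S \mapsto \var(X_k\given X_C)$ is Lipschitz on $\{\lambda_{\min}\ge (2M)^{-1}\}$ with a Lipschitz constant that is polynomial in $M$; this gives a bound of the form $|\var(X_k\given X_C) - v_{kC}| \lesssim \mathrm{poly}(M)\cdot\|\wh{\Sigma}_S - \Sigma_S\|_{\mathrm{op}}$ on the event that $\|\wh{\Sigma}_S - \Sigma_S\|_{\mathrm{op}}$ is small enough (say, at most $(2M)^{-1}$) so that $\lambda_{\min}(\wh{\Sigma}_{CC})$ stays bounded below. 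The exponent $M^5$ in the statement comes from combining several such polynomial factors (one from $\|\Sigma_{CC}^{-1}\|$, one from the perturbation of the inverse, one from $\|\Sigma_{kC}\|$, etc.); I would not track the exact power carefully in the plan, but the bound $|\var(X_k\given X_C) - v_{kC}| \le C_0 M^4 \|\wh{\Sigma}_S - \Sigma_S\|_{\mathrm{op}}$ (on the good event) is what I expect, and then $\epsilon/M^5$ appears after also paying one more factor of $M$ to absorb the ``good event'' threshold into the same scale.

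Next I would invoke a standard subgaussian covariance concentration bound for a submatrix of size $p := |S| \le q+1$. Since $X\sim\normalN(0,\Sigma)$ with $\|\Sigma\|_{\mathrm{op}}\le M$, the restriction $X_S$ is subgaussian with parameter $O(M)$, and the classical bound (e.g. via an $\epsilon$-net over the sphere in $\R^p$, Vershynin-style) gives
\[
\truepr\big(\|\wh{\Sigma}_S - \Sigma_S\|_{\mathrm{op}} \ge t\big) \le 2\cdot 9^{p}\exp\big(-c\, n\min(t^2/M^2, t/M)\big),
\]
valid for $t$ in a suitable range. Plugging in $t \asymp \epsilon/M^4$ (so that the Lipschitz bound yields deviation at most $\epsilon$) and noting $p \le q+1$, the $9^p$ term contributes the $+q$ in the exponent, and for the relevant range of $\epsilon$ (where $t/M \le 1$, so the linear regime of the Bernstein-type bound dominates, or one simply keeps the $\min$ and notes $t/M$ is the binding term once $t$ is small) we get $\exp(-c' n\epsilon/M^5 + q)$, which is exactly the claimed form with $A_1 = 2$ (or any absolute constant) and $A_2 = c'$. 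I would also remark that the ``for all $k, C$'' in the statement is pointwise — a single $(k,C)$ pair — so no union bound is needed here; the union over all $O(d^{q+1})$ choices of $(k,C)$ is taken later when Lemma~\ref{lem:ub:tailprob} is combined with Lemma~\ref{lem:ub:est}, and that is precisely where the $q\log(d/q)$ term in the sample complexity of Theorem~\ref{thm:ub} is generated.

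The main obstacle is the perturbation analysis of the Schur complement, specifically getting a clean bound on $\|\wh{\Sigma}_{CC}^{-1} - \Sigma_{CC}^{-1}\|_{\mathrm{op}}$ that holds with the right probability and the right power of $M$. The standard identity $\wh{\Sigma}_{CC}^{-1} - \Sigma_{CC}^{-1} = -\wh{\Sigma}_{CC}^{-1}(\wh{\Sigma}_{CC} - \Sigma_{CC})\Sigma_{CC}^{-1}$ requires a lower bound on $\lambda_{\min}(\wh{\Sigma}_{CC})$, which only holds on the event $\{\|\wh{\Sigma}_{CC}-\Sigma_{CC}\|_{\mathrm{op}} \le (2M)^{-1}\}$ by Weyl's inequality; on the complement of this event one simply uses the trivial bound that the probability is already controlled by the concentration inequality above (since the complement event is itself of the form whose probability we are bounding). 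So the argument naturally splits into two cases, and care is needed to make sure the case-splitting does not degrade the exponent. A secondary technical point is ensuring the final bound is stated for \emph{all} $\epsilon > 0$ (not just small $\epsilon$): for large $\epsilon$ the left side can be made vacuously true by choosing $A_1$ large enough, or one observes that $\var(X_k\given X_C)$ and $v_{kC}$ are both bounded (the former by $M$, the latter a.s. by a constant times $M$ on the good event and handled by the trivial bound otherwise), so the inequality holds trivially once $\epsilon$ exceeds that bound. Modulo these bookkeeping issues, the proof is a routine combination of matrix perturbation and subgaussian concentration.
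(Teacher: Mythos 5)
Your proposal follows essentially the same route as the paper's proof: the paper carries out your ``Lipschitz/perturbation'' step explicitly, decomposing $\var(X_k\given X_C)-v_{kC}$ by triangle inequalities into deviations of the subblocks $\wh{\Sigma}_{kk},\wh{\Sigma}_{kC},\wh{\Sigma}_{CC}^{-1}$, controlling the inverse via the same perturbation identity and good-event threshold $\|\wh{\Sigma}_{CC}-\Sigma_{CC}\|\lesssim\zeta/M^2$, and then invoking Gaussian covariance concentration on the $(q{+}1)$-dimensional submatrix, which is exactly where the $+q$ in the exponent and the $M^5$ bookkeeping (your $t\asymp\epsilon/M^4$ plus one factor of $M$ from the tail) come from. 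Your observation that no union bound is needed at this stage is also consistent with the paper, which performs the union over the $O(d\cdot\binom{d-1}{q})$ pairs $(k,C)$ only in the proof of Theorem~\ref{thm:ub}.
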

\begin{proof}
Denote the covariance between $k$ and set of nodes $C$ at
\[
\Sigma_t=
\begin{pmatrix}
\Sigma_{kk} & \Sigma_{kC} \\
\Sigma_{Ck} & \Sigma_{CC}
\end{pmatrix}
\]
Note that $\rVert\Sigma_{kC} \lVert, \rVert \Sigma_{CC}\lVert,\rVert \Sigma^{-1}_{CC} \lVert \le \rVert\Sigma_t \lVert \le \lVert\Sigma\rVert \le M$.

Then the estimation error for conditional variance
\begin{align*}
    &\big|\var(X_k\given X_C) - \widehat{\var}(X_k\given X_C) \big| \\
    & = \big|(\Sigma_{kk} - \Sigma_{kC} \Sigma_{CC}^{-1}\Sigma_{Ck} ) - ( \widehat{\Sigma}_{kk} - \widehat{\Sigma}_{kC} \widehat{\Sigma}_{CC}^{-1}\widehat{\Sigma}_{Ck}) \big| \\
    & \le \big|\widehat{\Sigma}_{kk} - \Sigma_{kk}\big| + \big|\widehat{\Sigma}_{kC}(\widehat{\Sigma}_{CC}^{-1} - \Sigma_{CC}^{-1})\widehat{\Sigma}_{Ck}\big| \\ &\qquad+\big|(\widehat{\Sigma}_{kC}-\Sigma_{kC})\Sigma_{CC}^{-1}\widehat{\Sigma}_{Ck}\big| +\big|\Sigma_{kC}\Sigma^{-1}_{CC}(\widehat{\Sigma}_{Ck} - \Sigma_{Ck})\big|\\
    & \le \big|\widehat{\Sigma}_{kk} - \Sigma_{kk}\big| + \lVert\widehat{\Sigma}_{CC}^{-1} - \Sigma_{CC}^{-1}\rVert \lVert\widehat{\Sigma}_{kC}\rVert^2 \\ 
    &\qquad+ M\lVert\widehat{\Sigma}_{kC}-\Sigma_{kC}\rVert  \lVert\widehat{\Sigma}_{Ck}\rVert + M^2 \lVert\widehat{\Sigma}_{Ck} - \Sigma_{Ck}\rVert \\
    & \le \big|\widehat{\Sigma}_{kk} - \Sigma_{kk}\big| \\ 
    &\qquad+ 2\lVert\widehat{\Sigma}_{CC}^{-1} - \Sigma_{CC}^{-1}\rVert \lVert\widehat{\Sigma}_{kC} - \Sigma_{kC}\rVert^2 + 2M^2\lVert\widehat{\Sigma}_{CC}^{-1} - \Sigma_{CC}^{-1}\rVert \\ 
    &\qquad+ M\lVert\widehat{\Sigma}_{kC}-\Sigma_{kC}\rVert^2 + M\lVert\widehat{\Sigma}_{kC}-\Sigma_{kC}\rVert \\
    & +M^2 \lVert\widehat{\Sigma}_{Ck} - \Sigma_{Ck}\rVert \\
    & \le \big|\widehat{\Sigma}_{kk} - \Sigma_{kk}\big| \\ 
    &\qquad+ 2\lVert\widehat{\Sigma}_{CC}^{-1} - \Sigma_{CC}^{-1}\rVert  \lVert\widehat{\Sigma}_t - \Sigma_{t}\rVert^2 + 2M^2\lVert\widehat{\Sigma}_{CC}^{-1} - \Sigma_{CC}^{-1}\rVert \\ 
    &\qquad+ M\lVert\widehat{\Sigma}_{t}-\Sigma_{t}\rVert^2 + M\lVert\widehat{\Sigma}_{t}-\Sigma_{t}\rVert +M^2 \lVert\widehat{\Sigma}_{t} - \Sigma_{t}\rVert
\end{align*}
The first inequality is by the triangular inequality, and the second simply bounds $\lVert\Sigma_{kC}\rVert$ by $M$. The third inequality introduces the estimation error of $\widehat{\Sigma}_{kC}$ and the final inequality replaces this with the estimation error of the full covariance matrix $\widehat{\Sigma}_t$. To set the RHS to be smaller than $\epsilon>0$, we consider three estimation errors. The first two can be controlled via standard sub-exponential concentration, whereas the third can be controlled via Theorem 6.5 from \citet{wainwright2019high}:
\begin{align*}
    & \truepr(|\widehat{\Sigma}_{kk} - \Sigma_{kk}| \ge \zeta) \le \exp(-A_3n\zeta / M) \\
    & \truepr(\lVert\widehat{\Sigma}_{t} - \Sigma_{t}\rVert \ge \zeta) \le \exp(-A_4n\zeta / M + q)
\end{align*}
for some constants $A_3,A_4$. The largest error is from 
\begin{align*}
    \lVert\widehat{\Sigma}_{CC}^{-1} - \Sigma_{CC}^{-1}\rVert & \le \lVert\Sigma^{-1}_{CC}\rVert\lVert\widehat{\Sigma}_{CC}^{-1}\rVert\lVert\Sigma_{CC} - \widehat{\Sigma}_{CC}\rVert \\
    &\le M\lVert\widehat{\Sigma}_{CC}^{-1}\rVert\lVert\Sigma_{CC} - \widehat{\Sigma}_{CC}\rVert \\
    & = M (\lVert\widehat{\Sigma}^{-1}_{CC}\rVert -\lVert \Sigma_{CC}^{-1}\rVert) \lVert\Sigma_{CC} - \widehat{\Sigma}_{CC}\rVert \\
    &\qquad+ M \lVert\Sigma_{CC}^{-1}\rVert\lVert\Sigma_{CC} - \widehat{\Sigma}_{CC}\rVert \\
    & \le M  \lVert\widehat{\Sigma}_{CC}^{-1} - \Sigma_{CC}^{-1}\rVert\lVert\Sigma_{CC} - \widehat{\Sigma}_{CC}\rVert \\
    & + M^2 \lVert\Sigma_{CC} - \widehat{\Sigma}_{CC}\rVert
\end{align*}
After some arrangement, we have
\begin{align*}
     \lVert\widehat{\Sigma}_{CC}^{-1} - \Sigma_{CC}^{-1}\rVert \le \frac{M^2\lVert\Sigma_{CC} - \widehat{\Sigma}_{CC}\rVert}{1- M \lVert\Sigma_{CC} - \widehat{\Sigma}_{CC}\rVert} \le \zeta
\end{align*}
as long as 
\[
\lVert\Sigma_{CC} - \widehat{\Sigma}_{CC}\rVert \le \frac{\zeta}{M^2 + M\zeta} \le \frac{\zeta}{M^2}\,.
\]
This is just another Gaussian covariance matrix estimation error, i.e.
\[
\truepr(\lVert\widehat{\Sigma}_{CC} - \Sigma_{CC}\rVert > \zeta/M^2) \le \exp(-A_5n\zeta / M^3 + q)\,,
\]
for some constant $A_5$. Now we require all the errors to be bounded by $\zeta = \epsilon/M^2$ such that the conditional variance estimation error is within $\epsilon$. Thus
\begin{align*}
    &\truepr(|\var(X_k|X_C) -\widehat{\var}(X_k|X_C)| > \epsilon)\\
    &\le \exp(-A_3n\epsilon/M^3) + \exp(-A_4n\epsilon/M^3 + q) \\
    &\qquad+ \exp(-A_5n\epsilon/M^5 + q) \\
    & \le A_1\exp(-A_2n\epsilon/M^5 + q)\,.\qedhere
\end{align*}
\end{proof}

\subsection{Proof of Theorem~\ref{thm:ub}}\label{app:ub:proof}
\begin{proof}
Combine Lemma~\ref{lem:ub:est} and \ref{lem:ub:tailprob}, we can have success probability:
\begin{align*}
    \truepr(\wh{\gr}\ne\gr) & \le \truepr\bigg(\bigcup_{\substack{k\in V \\ C\subset V\setminus\{k\} \\ |C|\le q}} \{|\var(X_k\given X_C) - v_{kC}| \ge \epsilon \}\bigg) \\
    &\le \sum_{\substack{k \in V \\ C\subset V\setminus\{k\}  \\ |C|\le q}}\truepr\bigg( |\var(X_k\given X_C) - v_{kC}| \ge \epsilon \bigg)\\
    & \lesssim d\times (\binom{d-1}{1} + \ldots + \binom{d-1}{q})\times \exp(q - n\epsilon / M^5) \\
    & \lesssim d\times q \times e^{q\log(d/q)}\times \exp(q - n\epsilon / M^5) \\
    & = \exp(q\log(d/q) + \log d + \log q + q - n\epsilon / M^5) \\
    & \asymp \exp(q\log(d/q) -n\epsilon/M^5)\,.
\end{align*}
The last equality is by $(\frac{d}{q})^q = (\frac{d}{q})^{q-1}\times (\frac{d}{q}) \geq 2^{q-1}\times (\frac{d}{q}) \geq q\times  \frac{d}{q} = d$, thus $q\log(d/q) \gtrsim \log d$. In the end, replace $\epsilon$ by $\Delta/4$, solve for the sample size $n$ such that
\[
\exp(q\log(d/q) - n\Delta/M^5) \lesssim \delta\,,
\]
we can have the desired sample complexity.
\end{proof}

\subsection{Proof of Theorem~\ref{thm:ub:tuning}}\label{app:ub:tuning}
\begin{proof}
In the proof of Lemma~\ref{lem:ub:est}, denote the estimation error to be upper bounded by $\epsilon$, i.e. for all $k\in V$ and $C\subset V\setminus \{k\}$, $|C|\le q$,
\[
|\var(X_k\given X_C) - v_{kC}|\le \epsilon \,,
\]
then it suffices to have
\[
2\epsilon < \gamma < \Delta/2 - 2\epsilon
\]
for the correctness of second phase to proceed. Therefore, let $\gamma = 3\epsilon$ and require $\epsilon < \Delta/10$. Finally, set
\[
\epsilon \asymp \frac{2M^5\times q\log (d/q)}{n}
\]
Then we have failure probability bounded:
\begin{align*}
    \truepr(\wh{\gr}\ne\gr) & \lesssim\exp(q\log(d/q) - n\epsilon/M^5)\\
    &= \exp(-q\log(d/q)) \,.
\end{align*}
And to satisfy the requirement $\epsilon< \Delta/10$, we need sample size
\begin{align*}
n&\gtrsim \frac{M^5 q\log(d/q)}{\Delta}\,.\qedhere
\end{align*}
\end{proof}

\section{Proof of lower bound}\label{app:lb}
\subsection{Preliminaries}\label{app:lb:prel}
Let's start with recalling Fano's inequality and its corollary under the structure learning setting. Let $\theta(F)$ be a  parameter associated to some observational distribution $F$.
\begin{lemma}[\citealp{yu1997assouad}, Lemma 3]\label{lem:lb:fano}
For a class of distributions $\mathcal{F}$ and its subclass $\mathcal{F}' = \{F_1,\ldots,F_N\}\subseteq \mathcal{F}$, 
\[
\inf_{\wh{\gr}}\sup_{F\in\mathcal{F}} \E_F \mathbf{dist}(\theta(F), \wh{\theta}) \ge \frac{s}{2}\bigg(1 - \frac{n\alpha + \log 2 }{\log N}\bigg)
\]
where 
\begin{align*}
\alpha &= \max_{F_j\ne F_k\in\mathcal{F}'}\mathbf{KL}(F_j|| F_k) \\ 
s& =\max_{F_j\ne F_k\in\mathcal{F}'}\mathbf{dist}(\theta(F_j),\theta(F_k))    
\end{align*}
\end{lemma}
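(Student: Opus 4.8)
The plan is to prove this as the classical ``Fano method'' lower bound, in three moves: (i) reduce estimation of $\theta(F)$ over $\mathcal{F}$ to an $N$-ary identification problem over the finite subfamily $\mathcal{F}'$; (ii) apply Fano's inequality to that problem; and (iii) bound the resulting mutual information by the pairwise KL divergences.

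For step (i), I would restrict the supremum to $\mathcal{F}'$ and place the uniform prior on an index $J\in[N]$, so that conditionally on $J=j$ the sample $X^{n}=(X_{1},\dots,X_{n})$ is i.i.d.\ from $F_{j}$. Given any estimator $\wh{\theta}=\wh{\theta}(X^{n})$, define the minimum-distance decoder $\psi=\argmin_{j\in[N]}\mathbf{dist}(\wh{\theta},\theta(F_{j}))$, with ties broken arbitrarily. If $\psi=k\ne j$, the triangle inequality for $\mathbf{dist}$ gives $\mathbf{dist}(\theta(F_{j}),\theta(F_{k}))\le\mathbf{dist}(\theta(F_{j}),\wh{\theta})+\mathbf{dist}(\wh{\theta},\theta(F_{k}))\le 2\,\mathbf{dist}(\theta(F_{j}),\wh{\theta})$, hence $\mathbf{dist}(\theta(F_{j}),\wh{\theta})\ge\tfrac{1}{2}\,\mathbf{dist}(\theta(F_{j}),\theta(F_{k}))$. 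In the structure-recovery ensembles of Section~\ref{sec:lb}, $\mathbf{dist}$ is the $0/1$ (Hamming) loss on graphs and $\mathcal{F}'$ is chosen so that distinct hypotheses induce distinct graphs, so every such pair is at distance exactly $s$; this yields $\E_{F_{j}}\mathbf{dist}(\theta(F_{j}),\wh{\theta})\ge\tfrac{s}{2}\,\pr_{F_{j}}(\psi\ne j)$, and averaging over $j$ gives $\sup_{F\in\mathcal{F}}\E_{F}\,\mathbf{dist}(\theta(F),\wh{\theta})\ge\tfrac{1}{N}\sum_{j=1}^{N}\E_{F_{j}}\,\mathbf{dist}(\theta(F_{j}),\wh{\theta})\ge\tfrac{s}{2}\,\pr(\psi\ne J)$.

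For step (ii), I would apply Fano's inequality to the Markov chain $J\to X^{n}\to\psi$, which gives $\pr(\psi\ne J)\ge 1-\bigl(I(J;X^{n})+\log 2\bigr)/\log N$. For step (iii), I would write $I(J;X^{n})=\tfrac{1}{N}\sum_{j}\mathbf{KL}\bigl(F_{j}^{\otimes n}\,||\,\tfrac{1}{N}\sum_{k}F_{k}^{\otimes n}\bigr)$; by convexity of $Q\mapsto\mathbf{KL}(P\,||\,Q)$ this is at most $\tfrac{1}{N^{2}}\sum_{j,k}\mathbf{KL}(F_{j}^{\otimes n}\,||\,F_{k}^{\otimes n})$, and tensorization of KL across the $n$ i.i.d.\ coordinates turns each term into $n\,\mathbf{KL}(F_{j}\,||\,F_{k})\le n\alpha$ (the diagonal terms vanish). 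Hence $I(J;X^{n})\le n\alpha$, and combining the three displays gives $\inf_{\wh{\theta}}\sup_{F\in\mathcal{F}}\E_{F}\,\mathbf{dist}(\theta(F),\wh{\theta})\ge\tfrac{s}{2}\bigl(1-(n\alpha+\log 2)/\log N\bigr)$, as claimed.

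I do not anticipate a genuine obstacle: this is a textbook result, cited from \citet{yu1997assouad}. The only two points needing care are that $\mathbf{dist}$ satisfy a triangle inequality so that minimum-distance decoding is valid --- immediate for the Hamming loss used in our lower-bound ensembles --- and the correct reading of $s$: the bound most naturally features the packing radius $\min_{j\ne k}\mathbf{dist}(\theta(F_{j}),\theta(F_{k}))$, which coincides with the displayed maximum exactly when the chosen subfamily is equidistant, as holds in every application in this paper since there $\mathbf{dist}\in\{0,1\}$.
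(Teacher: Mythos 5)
Your proof is correct and is essentially the standard argument behind the cited result (Yu 1997, Lemma 3) --- reduction to an $N$-ary identification problem via minimum-distance decoding, Fano's inequality for the chain $J\to X^{n}\to\psi$, and the convexity/tensorization bound $I(J;X^{n})\le n\alpha$ --- and the paper itself offers no proof beyond the citation, so there is nothing differing to compare. Your closing caveat is also right: the bound genuinely requires the minimum pairwise separation $\min_{j\ne k}\mathbf{dist}(\theta(F_j),\theta(F_k))$ rather than the maximum as displayed, which is immaterial here only because the $0/1$ loss used in Corollary~\ref{col:lb:fano} makes every ensemble equidistant.
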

Set $\theta(F)=\gr(F)$, $\mathbf{dist}(\cdot,\cdot) = \mathbf{1}\{\cdot \ne \cdot\}$. One consequence of Lemma~\ref{lem:lb:fano} is as follows:
\begin{corollary}\label{col:lb:fano}
Consider some subclass $\mathcal{\gr}'=(G_1,\ldots,G_N)\subseteq \mathcal{\gr}_{d,q}$, and let $\mathcal{F}' = \{F_1,\ldots,F_N\}$, each of whose elements is generated by one distinct $\gr \in \mathcal{G}'$. If the sample size is bounded as
\[
n \le \frac{(1-2\delta)\log N}{\alpha} \,,
\]
then the any estimator for $\gr$ is $\delta$-unreliable:
\[
\inf_{\wh{\gr}}\sup_{F\in\mathcal{F}_{d,q}(\beta_{\min},\sigma^2,M)} \truepr(\wh{\gr}\ne \gr(F)) \ge \delta - \frac{\log 2}{\log N}
\]
\end{corollary}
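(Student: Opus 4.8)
The plan is to obtain Corollary~\ref{col:lb:fano} as a direct specialization of Lemma~\ref{lem:lb:fano}. First I would instantiate the abstract parameter as the identified DAG, $\theta(F)=\gr(F)$ (this is well-defined precisely because $\gr(F)$ is identifiable from any $F\in\mathcal{F}_{d,q}(\beta_{\min},\sigma^2,M)$), and take the $0/1$ loss $\mathbf{dist}(G,G')=\mathbf{1}\{G\ne G'\}$ as the pseudometric on DAGs, which is legitimate since Lemma~\ref{lem:lb:fano} only uses the pseudometric property. With this choice the left-hand side of Lemma~\ref{lem:lb:fano} is exactly $\inf_{\wh{\gr}}\sup_{F\in\mathcal{F}}\E_F\mathbf{1}\{\gr(F)\ne\wh{\gr}\}=\inf_{\wh{\gr}}\sup_{F\in\mathcal{F}}\truepr(\wh{\gr}\ne\gr(F))$, because the expectation of an indicator is a probability.

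Next I would evaluate the two ensemble-dependent constants in Lemma~\ref{lem:lb:fano}. Since $F_1,\dots,F_N$ are generated by \emph{distinct} DAGs $G_1,\dots,G_N$, the separation constant is $s=\max_{F_j\ne F_k}\mathbf{1}\{\gr(F_j)\ne\gr(F_k)\}=1$, and the information constant $\alpha=\max_{F_j\ne F_k}\mathbf{KL}(F_j\|F_k)$ is exactly the quantity $\alpha$ appearing in the hypothesis of the corollary. Substituting $s=1$ into Lemma~\ref{lem:lb:fano} gives
\[
\inf_{\wh{\gr}}\sup_{F\in\mathcal{F}_{d,q}(\beta_{\min},\sigma^2,M)}\truepr(\wh{\gr}\ne\gr(F))\ \ge\ \frac{1}{2}\left(1-\frac{n\alpha+\log 2}{\log N}\right).
\]

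Finally I would plug in the sample-size bound: if $n\le(1-2\delta)\log N/\alpha$ then $n\alpha\le(1-2\delta)\log N$, so $1-\frac{n\alpha+\log 2}{\log N}\ge 2\delta-\frac{\log 2}{\log N}$, and multiplying by $\frac{1}{2}$ yields a lower bound of $\delta-\frac{\log 2}{2\log N}\ge\delta-\frac{\log 2}{\log N}$, which is the asserted inequality (the corollary's stated constant is slightly lossy, which is harmless). There is no genuine obstacle in this step — it is pure bookkeeping — and the only two points warranting a word of care are (i) that the constructed ensemble $\mathcal{F}'$ really lies inside $\mathcal{F}_{d,q}(\beta_{\min},\sigma^2,M)$, so that the supremum over the full class in Lemma~\ref{lem:lb:fano} dominates the supremum over $\mathcal{F}'$ and the conclusion can be stated for the full class, and (ii) that $\gr(\cdot)$ is a genuine function on $\mathcal{F}$ by identifiability. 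The substantive work of the lower bound is thereby deferred to the two ensemble constructions (the full sparse-DAG family and the single-edge family) carried out in the remainder of Appendix~\ref{app:lb}, where one must exhibit $N$ large and $\alpha$ small.
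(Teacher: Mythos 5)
Your proposal is correct and follows essentially the same route as the paper, which likewise obtains the corollary by instantiating Lemma~\ref{lem:lb:fano} with $\theta(F)=\gr(F)$ and the $0/1$ loss (giving $s=1$) and then substituting the sample-size bound; your observation that this in fact yields the slightly stronger constant $\delta-\frac{\log 2}{2\log N}$ is accurate and harmless. Nothing is missing.
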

Thus the \textbf{strategy} for building lower bound is to find a subclass of original problem such that
\begin{itemize}
    \item Has large cardinality $N$;
    \item Pairwise KL divergence between any two distributions is small.
\end{itemize}
Now we do some counting for the number of DAGs with $d$ nodes and in-degree bounded by $q$.
\begin{lemma}\label{lem:lb:numdag}
For $q\le d/2$,
the number of DAGs with $d$ nodes and in-degree bounded by $q$ scales as $\Theta(dq\log(d/q))$.
\end{lemma}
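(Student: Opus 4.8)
The statement should be read as a bound on the logarithm of the cardinality: the plan is to show $\log|\mathcal{G}_{d,q}| = \Theta\big(dq\log(d/q)\big)$, equivalently $|\mathcal{G}_{d,q}| = \exp\big(\Theta(dq\log(d/q))\big)$. This is exactly the ingredient needed for the first ensemble in Corollary~\ref{col:lb:fano}: combined with the generic bound $\alpha = O\big(d(M^2-1)\big)$ on the pairwise KL divergence of zero-mean Gaussians with eigenvalues in $[M^{-1},M]$, a count of order $\exp(\Theta(dq\log(d/q)))$ yields the threshold $q\log(d/q)/(M^2-1)$ in Theorem~\ref{thm:lb}. The argument is purely combinatorial and, as the paper notes, parallels the counting step in \citet{santhanam2012information}, the only change being that we count directed acyclic (rather than undirected) graphs.

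For the \emph{upper bound}, every DAG on $[d]$ admits at least one topological ordering, and once such an ordering $\pi$ is fixed the DAG is determined by the choice, for each node, of a parent set among the $\le d-1$ nodes preceding it. Hence $|\mathcal{G}_{d,q}| \le d!\,\big(\textstyle\sum_{i=0}^{q}\binom{d-1}{i}\big)^{d} \le d!\,(ed/q)^{qd}$, where we used the standard tail estimate $\sum_{i\le q}\binom{d-1}{i}\le (ed/q)^{q}$, valid since $q\le d/2$. Taking logarithms, $\log|\mathcal{G}_{d,q}| \le \log(d!) + dq\,(1+\log(d/q)) \le d\log d + dq\,(1+\log(d/q))$. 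Finally, the elementary inequality $\log d \le q\log(d/q)$ already invoked in the excerpt (it follows from $(d/q)^q\ge d$) together with $1\le 2\log(d/q)$ (since $d/q\ge 2$) gives $\log|\mathcal{G}_{d,q}| = O\big(dq\log(d/q)\big)$.

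For the \emph{lower bound}, I would restrict to the subfamily in which the topological ordering is the identity $1<\cdots<d$ and node $k$ is assigned an arbitrary parent set of size $\min(q,k-1)$ chosen from $\{1,\dots,k-1\}$. Distinct assignments produce DAGs with distinct edge sets, so $|\mathcal{G}_{d,q}| \ge \prod_{k=q+2}^{d}\binom{k-1}{q} = \prod_{j=q+1}^{d-1}\binom{j}{q} \ge \prod_{j=q+1}^{d-1}(j/q)^{q}$. Taking logarithms and comparing the sum $\sum_{j=q+1}^{d-1}\log(j/q)$ to the integral $\int_{q}^{d}\log(t/q)\,dt = d\log(d/q)-d+q$ (the monotonicity of $\log$ making this an under-estimate up to a lower-order $O(\log d)$ term) yields $\log|\mathcal{G}_{d,q}| \gtrsim qd\,\big(\log(d/q)-1+q/d\big)$. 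The one remaining point — and really the only nontrivial step — is to check that $\log(d/q)-1+q/d \ge c\log(d/q)$ for an absolute constant $c>0$ uniformly over $1\le q\le d/2$; writing $x=q/d\in(0,\tfrac12]$, this reduces to showing $x\mapsto (1-x)/\log(1/x)$ is maximized on $(0,\tfrac12]$ at $x=\tfrac12$, a short one-variable calculus computation (its worst-case value there is $<1$, giving $c=1-(1-\tfrac12)/\log 2 > 0$). Combining the two bounds gives $\log|\mathcal{G}_{d,q}| = \Theta\big(dq\log(d/q)\big)$, as claimed. I expect this uniform-over-$q$ elementary estimate to be the only place requiring care; the rest is bookkeeping with binomial bounds.
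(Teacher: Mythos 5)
Your proof is correct, but it reaches both bounds by a different route than the paper. For the upper bound, the paper follows \citet{santhanam2012information} and simply counts directed graphs with at most $dq$ edges, bounding the count by $dq\binom{d^2}{dq}$; you instead decompose each DAG as (topological ordering) $\times$ (per-node parent sets), giving $d!\,\bigl(\sum_{i\le q}\binom{d-1}{i}\bigr)^{d}$, and then absorb the $d\log d$ term using $\log d\le q\log(d/q)$ — both yield $O(dq\log(d/q))$, and your version exploits acyclicity directly rather than passing to all directed graphs. For the lower bound, the paper partitions the nodes into $q+1$ equal groups and counts layered permutation (perfect-matching) structures, obtaining $\bigl((d/(q+1))!\bigr)^{q(q+1)/2}$ DAGs, with a mild divisibility caveat ($d/(q+1)$ integer, else discard nodes); you instead fix the identity ordering and let each node $k>q+1$ choose an arbitrary $q$-subset of its predecessors, giving $\prod_{j=q+1}^{d-1}\binom{j}{q}$, and then verify via the elementary monotonicity of $x\mapsto(1-x)/\log(1/x)$ on $(0,\tfrac12]$ that the resulting $qd\bigl(\log(d/q)-1+q/d\bigr)$ is $\gtrsim qd\log(d/q)$ uniformly over $q\le d/2$ — that calculus step is the only delicate point, and your argument for it (via $\ln x\ge 1-1/x$ the function is increasing, so its maximum on $(0,\tfrac12]$ is $1/(2\log 2)<1$) is sound. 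Your construction is arguably cleaner and avoids the divisibility issue, while the paper's mirrors the classical undirected-graph ensemble; the final $\Theta(dq\log(d/q))$ scaling is the same either way, and your reading of the lemma as a statement about $\log|\mathcal{G}_{d,q}|$ matches the paper's intended use in Corollary~\ref{col:lb:fano}.
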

\begin{proof}
The proof construction is similar to \citet{santhanam2012information}. We can upper bound by number of directed graphs (DG), and lower bound by one particular subclass of DAGs.

For upper bound, note that a DG has at most $d^2$ many, and a sparse DAG has at most $dq$ any edges. Since $q \le d/2$, then we have $dq \le d^2 / 2$. Since for $\ell<k\le d^2/2$, we have $\binom{d^2}{\ell} < \binom{d^2}{k}$, and there are $\binom{d^2}{k}$ DGs with exactly $k$ directed edges, then the numbder of DGs is upper bounded by 
\[
\log|DGs| = \log\sum_{k=1}^{dq} \binom{d^2}{k} \le \log dq\binom{d^2}{dq} \asymp dq\log\frac{d}{q}
\]

For lower bound, we look at one subclass of DAGs. Suppose $d/(q+1)$ is an integer, otherwise discard remaining nodes. First partition $d$ nodes into $q+1$ groups with equal size $d/(q+1)$.
Then for the first group, build directed edges from nodes in group $2,3,\ldots,q+1$ to group $1$, which requires $q$ permutations on $d/(q+1)$ nodes within one particular group. Then the nodes in group $1$ has exactly degree $q$. Similarly, for group $2$, build directed edges from group $3,4,\ldots,q+1$, which requires $q-1$ permutations on $d/(q+1)$ nodes. Therefore, for the subclass of DAGs generated in this way of partition, we have
\[
(\frac{d}{q+1}!)^{(q) + (q-1) +\ldots + 1} = (\frac{d}{q+1}!)^{q(q+1)/2}
\]
many DAGs, any of which is valid DAG and has degree bounded by $q$. Then the cardinality
\[
\log |DAGs| \ge \log (\frac{d}{q+1}!)^{q(q+1)/2} \asymp  dq\log \frac{d}{q}
\]
Thus the total number of DAGs scales as $\Theta(dq\log\frac{d}{q})$.
\end{proof}
\subsection{Proof of Theorem~\ref{thm:lb}}\label{app:lb:proof}
\begin{proof}
We consider two ensembles:
\paragraph{Ensemble A}
In this ensemble, we consider all possible DAGs with bounded in-degree. Note that by Lemma~\ref{lem:lb:numdag}, we know $N\asymp dq\log(d/q)$, it remains to provide an upper bound for the KL divergence between any two distributions within the class. For any two $F_j,F_k\in \mathcal{F}_{d,q}(\beta_{\min},\sigma^2,M)$, denote their covariance matrices to be $\Sigma_j,\Sigma_k$. Due to Gaussianity, It is easy to see that
\begin{align*}
    \mathbf{KL}(F_i|| F_j)  &=\frac{1}{2}\bigg(\E_{F_j}[X\T\Sigma_k^{-1}X] - \E_{F_j}[X\T\Sigma_j^{-1}X] \bigg) \\
    &=\frac{1}{2}\bigg(\E_{F_j}[X\T\Sigma_k^{-1}X] - d\bigg) \\
    &= (\tr(\Sigma^{-1}_k\Sigma_j) - d) / 2\\
    &\le (\sqrt{\tr(\Sigma_j^{-2})\tr(\Sigma_k^2)} - d)/2 \\
    & \le (M^2-1)d \,.
\end{align*}
Therefore, we can establish the first lower bound that
\[
n \gtrsim \frac{q\log(d/q)}{M^2-1}
\]
\paragraph{Ensemble B}
For this ensemble, we consider the DAGs with exactly one edge $u\to v$ and coefficient $\beta_{\min}$, denoted as $\gr^{uv}$. There are 2 directions and $d(d-1)/2$ many edges, so the cardinality of this ensemble would be $N=d(d-1)\asymp d^2$. Then denote the distribution defined according to $G^{uv}$ as $F^{uv}$, the log likelihood
\begin{align*}
-\log F^{uv} &\propto \frac{1}{2\sigma^2} \bigg[\sum_{i\notin \{u,v\}}X_i^2 + X_u^2 + (X_v-\beta_{\min} X_u)^2\bigg] \\
& + d\log \sigma
\end{align*}
and the difference between any two cases is
\begin{align*}
    \log F^{uv}  - \log F^{jk} &= \frac{1}{2\sigma^2}\bigg[ X_v^2 + (X_k-\beta_{\min} X_j)^2 \\
    &- X_k^2 - (X_v-\beta_{\min} X_u)^2 \bigg]
\end{align*}
Then take expectation over $F^{uv}$ we get the KL divergence:
\[
\mathbf{KL}(F^{uv}|| F^{jk})= \frac{1}{2\sigma^2}\bigg[\beta_{\min}^2\sigma^2 + \E_{F^{uv}}(\beta_{\min}^2X_j^2 - 2X_kX_j )\bigg]
\]
For fixed edge $(u,v)$, any other edges $(j,k)$ has relationship and corresponding KL divergence below:
\begin{itemize}
    \item $j\ne u, k\ne v$, $\mathbf{KL}(F^{uv}|| F^{jk})=\beta_{\min}^2$
    \item $j=u,k\ne v$, $\mathbf{KL}(F^{uv}|| F^{jk})=\beta_{\min}^2$
    \item $j\ne u,k=v$, $\mathbf{KL}(F^{uv}|| F^{jk})=\beta_{\min}^2$
    \item $ j=v, k=u$, $\mathbf{KL}(F^{uv}|| F^{jk})=\beta_{\min}^2 + \beta_{\min}^4/2 - \beta_{\min}$
    \item $j=v, k\ne u$, $\mathbf{KL}(F^{uv}|| F^{jk})=\beta_{\min}^2 + \beta_{\min}^4/2$
    \item $j\ne v,k=u$, $\mathbf{KL}(F^{uv}|| F^{jk})=\beta_{\min}^2$
\end{itemize}
Among them, the largest KL between $F^{uv},F^{jk}$ is $\beta_{\min}^2+\beta_{\min}^4/2$. Therefore, we can conclude a lower bound 
\begin{align*}
n 
&\gtrsim \frac{1}{\beta_{\min}^4+2\beta_{\min}^2} \log d \asymp \frac{\log d}{\beta_{\min}^2}.
\qedhere
\end{align*}
\end{proof}

\section{Proof of Proposition~\ref{prop:lb:pa_given_ordering}}\label{app:given_ordering}
\begin{proof}
For simplicity we consider DAGs with $d+1$ nodes. We first recall a known lower bound for sparsity recovery: Consider the linear model $Y=\beta \T X + \epsilon$ with $X\sim \mathcal{N}(0,\Sigma)$ and $\epsilon\sim\mathcal{N}(0,\sigma^2)$. The support of $\beta$ is $S\subset [d]$ and $|S|=q$. Let $\beta_{\min} := \min_{j:\beta_j\ne 0}|\beta_j|$. Then informally, 
\begin{lemma}[\citet{wainwright2009information}, Theorem~2]\label{lem:given_ordering:lb}
If
\[
n \le \frac{\log \binom{d}{q}}{8\omega_{bu}(\Sigma)\frac{\beta_{\min}^2}{\sigma^2}}
\]
where 
\[
    \omega_{bu}(\Sigma):=\E_S \bigg[\min_{|z_j|\ge 1\, \forall j}z_S\T \Sigma_{SS}z_S\bigg]\,,
\]
then with $q$ known, for any instance from the linear model and any estimator $\widehat{S}$ for $S$,
\[
\truepr(\widehat{S}\ne S) \ge \frac{1}{2}\,.
\]
\end{lemma}
Now we adapt this result to our setting, $\omega_{bu}(\Sigma)\beta_{\min}^2$ is the variance explained by $X$ under regression model, thus upper bounded by $M\beta^2_{\min}$ when regarding $X$ as parents in DAG. Additionally, since every Gaussian with positive definite $\Sigma$ has a minimal I-map and given ordering, the parents can be read off through regression, i.e. the model class in Lemma~\ref{lem:given_ordering:lb} is equivalent to the one generated in $\overline{\mathcal{F}}_{d,q}(\beta_{\min},\sigma^2_{\max},M)$. 
For any estimator $\widehat{\gr}=\widehat{\gr}(\tau)$, denote $\widehat{\pa}(k):=\pa_{\widehat{\gr}}(k)$ for any node $k$.
Then if 
\[
n \le \frac{\sigma^2_{\max}}{8M\beta_{\min}^2}d\log \frac{d}{q}\,,
\]
we have
\begin{align*}
    \inf_{F\in \overline{\mathcal{F}}_{d,q}(\beta_{\min},\sigma^2_{\max},M)}\truepr(\wh{\gr} = G(F) \given \tau) & = \inf_{F\in \overline{\mathcal{F}}_{d,q}(\beta_{\min},\sigma^2_{\max},M)}\truepr(\wh{\pa}(k) = \pa(k) \, \forall k \given \tau)\\
    & \le \inf_{F\in \overline{\mathcal{F}}_{d,q}(\beta_{\min},\sigma^2_{\max},M)}\truepr(\wh{\pa}(\tau_d) = \pa(\tau_d) \given \tau) \\
    &\le \inf_{\substack{F\in \overline{\mathcal{F}}_{d,q}(\beta_{\min},\sigma^2_{\max},M) \\ |\pa(\tau_d)|=q, \var(\epsilon_{\tau_d})=\sigma^2_{\max}}}\truepr(\wh{\pa}(\tau_d) = \pa(\tau_d) \given \tau)\\
    &\le \inf_{\substack{F\in \overline{\mathcal{F}}_{d,q}(\beta_{\min},\sigma^2_{\max},M) \\ |\pa(\tau_d)|=q, \var(\epsilon_{\tau_d})=\sigma^2_{\max}}}\truepr(\wh{\pa}(\tau_d) = \pa(\tau_d) \given |\pa(\tau_d)|=q, \tau)\\
    & < \frac{1}{2}
\end{align*}
The first inequality is by relaxing the problem to simply finding the parents of the last node from all preceding nodes. The second inequality is because we can restrict at a sub-ensemble of $\overline{\mathcal{F}}_{d,q}(\beta_{\min},\sigma^2_{\max},M)$ whose last node of ordering has $q$ parents and maximum noise $\var(\epsilon_{\tau_d})=\sigma^2_{\max}$. The third inequality is because knowing the number of parents only makes the problem easier. The final inequality is by noticing the equivalence to sparsity recovery problem and applying Lemma~\ref{lem:given_ordering:lb}.
\end{proof}

\section{Proof of lower bound of GGM (Theorem~\ref{thm:ug:lb})}\label{app:comp}
\begin{proof}
We introduce two useful lemmas from \citet{wang2010information}:
\begin{lemma}[\citet{wang2010information}, Section IV.A]\label{lem:comp:fano}
Consider a restricted ensemble $\widetilde{\mathcal{U}}\subseteq \mathcal{U}$ consisting of $N = |\widetilde{\mathcal{U}}|$ models, and let model index $\theta$ be chosen uniformly at random from $\{1,...,N\}$. Given the observations $X$, the error probability for any estimator $\widehat{U}$
\[
\max_{U\in \mathcal{U}}\truepr(\wh{U}\ne U) \ge \max_{j=1,\ldots,N}\truepr(\wh{U}\ne \widetilde{U}_j)\ge 1- \frac{I(\theta;X) + 1}{\log N}
\]
\end{lemma}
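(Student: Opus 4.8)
The plan is to obtain the displayed chain of inequalities from three standard facts: an elementary monotonicity of the worst-case error over nested families, the fact that a maximum dominates an average, and the generalized Fano method together with the data-processing inequality. All of the genuine content lies in the last step, so I expect no real obstacle.

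First I would dispense with the leftmost inequality. Every member $\widetilde U_j$ of the restricted ensemble $\widetilde{\mathcal U}$ is, by construction, an element of the full class $\mathcal U$, so writing $\truepr_U$ for the law of the data when the true graph is $U$ we have $\sup_{U\in\mathcal U}\truepr_U(\wh U\ne U)\ge\truepr_{\widetilde U_j}(\wh U\ne\widetilde U_j)$ for each fixed $j$; taking the maximum over $j=1,\ldots,N$ gives the first inequality. For the middle quantity, I would introduce the latent index $\theta$ drawn uniformly from $\{1,\ldots,N\}$ with $X\mid\{\theta=j\}$ distributed according to $\widetilde U_j$, and set $P_e:=\truepr(\wh U\ne\widetilde U_\theta)$, the average (Bayes) error of index recovery. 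Since a maximum is at least an average,
\[
\max_{j=1,\ldots,N}\truepr_{\widetilde U_j}(\wh U\ne\widetilde U_j)\;\ge\;\frac{1}{N}\sum_{j=1}^{N}\truepr_{\widetilde U_j}(\wh U\ne\widetilde U_j)\;=\;P_e .
\]

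It then remains to show $P_e\ge 1-\frac{I(\theta;X)+1}{\log N}$. Since $\wh U$ is a (possibly randomized) function of $X$ we have the Markov chain $\theta\to X\to\wh U$, and — passing if needed to the index estimator in $\{1,\ldots,N\}$ induced by $\wh U$, declaring an error whenever $\wh U\notin\widetilde{\mathcal U}$ — the recovery error event is exactly $\{\wh U\ne\widetilde U_\theta\}$. Fano's inequality, in the crude form $H(\theta\mid\wh U)\le 1+P_e\log N$ (bounding the binary entropy of $P_e$ by $1$ and $\log(N-1)$ by $\log N$), together with $H(\theta)=\log N$, gives
\[
\log N\;=\;I(\theta;\wh U)+H(\theta\mid\wh U)\;\le\;I(\theta;\wh U)+1+P_e\log N .
\]
Applying the data-processing inequality $I(\theta;\wh U)\le I(\theta;X)$ and rearranging yields the stated bound on $P_e$; chaining this with the previous two displays produces the full chain of inequalities in the lemma.

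No step here is delicate: with the stated $+1$ slack the argument is insensitive to the choice of logarithm base and absorbs the harmless bookkeeping around estimators taking values outside the ensemble, and the lemma is essentially a transcription of the information-theoretic lower-bound framework of \citet{wang2010information}, recorded here only for later use when we exhibit the two GGM ensembles realizing the bounds $\log d/\beta_{\min}^2$ and $q\log(d/q)/(M^2-1)$.
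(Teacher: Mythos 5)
Your proof is correct. The paper does not prove this lemma at all---it is imported verbatim from \citet{wang2010information} (Section IV.A)---and your argument is exactly the standard derivation behind that cited result: restriction to the sub-ensemble, max-dominates-average to pass to the uniform Bayes error, then Fano's inequality with $H(\theta\mid\wh U)\le 1+P_e\log N$ and the data-processing inequality along $\theta\to X\to \wh U$. The only nitpick is that the index-recovery error event is \emph{contained in} $\{\wh U\ne \widetilde U_\theta\}$ rather than equal to it (an estimator mapping $\wh U\notin\widetilde{\mathcal U}$ to a default index can be accidentally correct), but the inequality goes the right way, so nothing breaks.
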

\begin{lemma}[\citet{wang2010information}, Section IV.A]\label{lem:comp:entbound}
Define the averaged covariance matrix
\[
\bar{\Sigma} := \frac{1}{N}\sum_{j=1}^N\Sigma(\widetilde{U}_j)
\]
The mutual information is upper bounded by $I(\theta;X)\le \frac{n}{2}R(\widetilde{\mathcal{U}})$, where
\[
R(\widetilde{\mathcal{U}}) = \log \det\bar{\Sigma} - \frac{1}{N}\sum_{j=1}^N \log \det \Sigma(\widetilde{U}_j)
\]
\end{lemma}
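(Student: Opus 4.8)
The plan is to establish Lemma~\ref{lem:comp:entbound} via the entropy identity $I(\theta;X)=H(X)-H(X\given\theta)$, where $X=(X^{(1)},\dots,X^{(n)})\in\R^{nd}$ stacks the $n$ i.i.d.\ observations. First I would evaluate $H(X\given\theta)$ exactly: conditioned on $\theta=j$ the samples are i.i.d.\ $\mathcal{N}(0,\Sigma(\widetilde{U}_j))$, so by the Gaussian differential entropy formula $H(X\given\theta=j)=\tfrac n2\big(d\log(2\pi e)+\log\det\Sigma(\widetilde{U}_j)\big)$, and averaging uniformly over $j$ gives $H(X\given\theta)=\tfrac n2\big(d\log(2\pi e)+\tfrac1N\sum_{j=1}^N\log\det\Sigma(\widetilde{U}_j)\big)$.

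Next I would upper bound $H(X)$ by the maximum-entropy principle: among all laws on $\R^{nd}$ with a prescribed covariance, the Gaussian maximizes differential entropy, so $H(X)\le\tfrac12\log\big((2\pi e)^{nd}\det\Cov(X)\big)$. The crucial point is to identify $\Cov(X)$ under the mixture. Within a single observation, the law of total covariance gives $\Cov(X^{(i)})=\tfrac1N\sum_{j}\Sigma(\widetilde{U}_j)=\bar\Sigma$, since the within-component conditional mean is zero and the ``between'' term vanishes. Across two distinct observations $i\ne i'$, conditional independence given $\theta$ together with the centering of every component yields $\E[X^{(i)}(X^{(i')})\T]=\E_\theta\big[\E[X^{(i)}\given\theta]\,\E[X^{(i')}\given\theta]\T\big]=0$. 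Hence $\Cov(X)$ is block diagonal with $n$ identical blocks $\bar\Sigma$, so $\det\Cov(X)=(\det\bar\Sigma)^n$ and $H(X)\le\tfrac n2\big(d\log(2\pi e)+\log\det\bar\Sigma\big)$.

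Subtracting the two bounds, the $d\log(2\pi e)$ terms cancel and we obtain $I(\theta;X)\le\tfrac n2\big(\log\det\bar\Sigma-\tfrac1N\sum_{j=1}^N\log\det\Sigma(\widetilde{U}_j)\big)=\tfrac n2 R(\widetilde{\mathcal{U}})$, as desired. The only subtlety is the vanishing of the cross-sample covariance blocks, which is precisely what lets the determinant, and hence the entropy upper bound, factor cleanly over the $n$ samples; everything else is the closed-form Gaussian entropy and the standard max-entropy inequality, so I do not anticipate any real obstacle. I would also note, as a sanity check, that $R(\widetilde{\mathcal{U}})\ge0$ by concavity of $\log\det$ (Jensen), and that downstream this bound is combined with Lemma~\ref{lem:comp:fano} and a suitable choice of ensemble $\widetilde{\mathcal{U}}$ to prove Theorem~\ref{thm:ug:lb}.
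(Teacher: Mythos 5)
Your proof is correct, but note that the paper never proves this lemma itself: it is imported verbatim from \citet{wang2010information} (Section IV.A), so there is no in-paper argument to compare against. Your derivation is essentially the standard one used in that reference: write $I(\theta;X)=h(X)-h(X\given\theta)$, compute $h(X\given\theta)$ exactly from the Gaussian entropy formula, and bound $h(X)$ by the max-entropy Gaussian with covariance $\Cov(X)$, which is block diagonal with blocks $\bar\Sigma$ because, conditionally on $\theta$, the samples are independent and mean zero, so the unconditional cross-sample covariances vanish (even though the samples are not unconditionally independent, uncorrelatedness is all the covariance computation needs). All steps check out, the $d\log(2\pi e)$ terms cancel as you say, and your Jensen sanity check $R(\widetilde{\mathcal{U}})\ge 0$ is also right.
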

Another lemma for ease of presentation:
\begin{lemma}\label{lem:comp:det}
For a matrix of dimension $p$
\[
A = \begin{pmatrix}
1+a & b & \cdots & b \\
b & 1+a & \cdots & b \\
 &  & \cdots \\
b & b & \cdots & 1+a \\
\end{pmatrix}
\]
with $a,b\to 0$ and $pb\to 0$, the determinant $\log \det A \approx pa$.
\end{lemma}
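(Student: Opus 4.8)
The plan is to compute $\det A$ exactly via an eigenvalue decomposition and then expand to first order. Write $A = (1+a)I + b(J - I) = (1 + a - b)I + bJ$, where $J$ is the $p\times p$ all-ones matrix. Since $J$ has eigenvalue $p$ with multiplicity one (eigenvector $\mathbf{1}$) and eigenvalue $0$ with multiplicity $p-1$, the matrix $A$ has eigenvalues $1 + a - b + pb = 1 + a + (p-1)b$ (once) and $1 + a - b$ ($p-1$ times). Hence
\begin{align*}
\det A = \bigl(1 + a + (p-1)b\bigr)\bigl(1 + a - b\bigr)^{p-1}.
\end{align*}

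Taking logarithms, $\log\det A = \log\bigl(1 + a + (p-1)b\bigr) + (p-1)\log(1 + a - b)$. Now I would apply the expansion $\log(1+x) = x + O(x^2)$ under the stated regime $a, b \to 0$ and $pb \to 0$: the first term is $a + (p-1)b + O((a+pb)^2) = a + O(pb) + O((a+pb)^2)$, and the second term is $(p-1)\bigl(a - b + O((a+b)^2)\bigr) = (p-1)a + O(pb) + O(p(a+b)^2)$. Summing, the dominant contribution is $pa$, with all remaining terms $o(pa)$-or-smaller under the assumed scaling (in particular $pb = o(1)$ kills the $pb$ cross-terms relative to the leading behavior, and $pa^2 = o(pa)$ since $a\to 0$). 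This yields $\log\det A \approx pa$, as claimed.

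I do not expect a genuine obstacle here; the only mild subtlety is being careful about which error terms are negligible, since the statement is asymptotic and implicitly treats $pa$ as the quantity of interest. The cleanest route is the exact factorization above, after which the first-order Taylor expansion is routine. (An alternative would be to use $\log\det A = \tr\log A \approx \tr(A - I)$ when $A - I$ is small in operator norm, giving $\tr(A-I) = pa$ directly, but one must then check $\|A - I\|_{\mathrm{op}} = |a| + (p-1)|b| \to 0$, which is exactly the hypothesis $a \to 0$, $pb \to 0$; this is arguably the slicker argument.)
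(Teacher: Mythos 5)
Your proof is correct and follows essentially the same route as the paper: both derive the exact factorization $\det A = \bigl(1+a+(p-1)b\bigr)(1+a-b)^{p-1}$ (you via the eigenvalues of the all-ones matrix $J$, the paper via the rank-one determinant identity, a purely cosmetic difference) and then apply the first-order expansion of $\log(1+x)$ to conclude $\log\det A \approx pa$. Your bookkeeping of the error terms under $a,b\to 0$, $pb\to 0$ is in fact slightly more careful than the paper's.
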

\begin{proof}
\begin{align*}
    A & = (1+a-b)I_p + b\mathbf{1}_p\mathbf{1}_p\T \\
    & = (1+a-b)\bigg(I_p + \frac{b}{1+a-b}\mathbf{1}_p\mathbf{1}_p\T\bigg) \\
    \det A &= (1+a-b)^p \det \bigg(I_p + \frac{b}{1+a-b}\mathbf{1}_p\mathbf{1}_p\T\bigg) \\
    &= (1+a-b)^p\bigg(1 +\frac{bp}{1+a-b}\bigg)\\
    & = (1+a-b)^{p-1}(1+a+(p-1)b) \\
    \log \det A & = (p-1)\log(1+a-b) + \log (1+a+(p-1)b) \\
    &\approx (p-1)(a-b) + a+(p-1)b = pa.\qedhere
\end{align*}
\end{proof}
Finally, let's consider three ensembles of UGs generated by DAGs. We describe the ensembles by showing how the DAGs generate the UGs.

\paragraph{Ensemble A}
In this first Ensemble, we consider an empty DAG, then add one edge from node $S$ to $T$ with linear coefficient $\beta_{\min}$. Specifically,
\[
\begin{cases}
X_\ell = \beta_{\min}X_S + \epsilon_\ell & \ell = T \\
X_\ell =  \epsilon_\ell  & \ell \ne T 
\end{cases}
\]
Without loss of generality, let $\var(\epsilon_\ell)=1$, general variance $\sigma^2$ would not affect the final results. There are $N=d(d-1)$ possibilities, thus $\log N \asymp \log d$.

It remains to figure out the structure of covariance matrix and find out the corresponding determinants. Without loss of generality, let the first two nodes to be $S,T$, then the covariance matrix of any model (denoted as $j$th) is
\[
\Sigma_j = \begin{pmatrix}
1 & \beta_{\min} & \mathbf{0} \\
\beta_{\min} & 1+\beta_{\min}^2 & \mathbf{0} \\
\mathbf{0} & \mathbf{0} & I_{d-2}
\end{pmatrix}
\]
It is easy to see that $\log\det \Sigma_j =\log(1 + \beta_{\min}^2 - \beta_{\min}\times \beta_{\min}) = 0$ for all models in this subclass. To compute the average $\bar{\Sigma}$, by symmetry, all diagonal and off-diagonal entries are the same respectively. For entries on diagonal, there are two situations: whether it corresponds to node $T$ or not. For off-diagonal entries, there are two situations: corresponds to edge $S-T$ or not. Different situations behave differently Table~\ref{tab:DAGUG:ensemble:B} with total counts $N$:

\begin{table}[h]
\caption{Summary of situations of sub-covariance matrix entry in Ensemble A.}
\label{tab:DAGUG:ensemble:B}
\centering
\begin{tabular}{c|c|c}
           & \multicolumn{2}{c}{diagonal}                         \\ \hline
situations & $T$                               & Otherwise        \\
values     & $\beta_{\min}^2+1$                &   $1$            \\
counts     & $d-1$                             & $(d-1)^2$        \\ \hline
           & \multicolumn{2}{c}{off-diagonal}                     \\ \hline
situations & $T-S$                             & Otherwise        \\
values     & $\beta_{\min}$                    & $0$              \\
counts     & $2$                               & $d(d-1)-2$      
\end{tabular}
\end{table}

Thus we conclude the entries in $\bar{\Sigma}$:
\[
\bar{\Sigma}_{ik} = \begin{cases}
1 + \frac{\beta_{\min}^2}{d} := 1+a & i=k \\
\frac{2\beta}{d(d-1)} :=b &i\ne k
\end{cases}
\]
Using Lemma~\ref{lem:comp:det}, we conclude $\log \det\bar{\Sigma} \asymp \beta^2_{\min}$, and invoking Lemma~\ref{lem:comp:fano} and~\ref{lem:comp:entbound}, we obtain a lower bound as
\[
n \gtrsim \frac{\log d}{\beta_{\min}^2} \,.
\]

\paragraph{Ensemble B}
Here we can adopt the same construction as the first ensemble for DAG in Appendix~\ref{app:ub}, which applies analogously through Lemma~\ref{lem:lb:fano}. Since the joint distribution remains to be the same, we have KL divergence upper bounded by $(M^2-1)d$. 

For number of models inside this class, firstly we know that for a UG with degree bounded by $s$, there are $\Theta(ds\log{d/s})$ many UGs (Lemma 1(b) of \citet{santhanam2012information}). By Lemma~\ref{lem:lohbum}, $U=\mathcal{M}(\gr)$, so $q \le s$, thus the number of UGs would be greater than $\Theta(dq\log(d/q))$, which leads to the same lower bound:
\begin{align*}
n &\gtrsim \frac{q\log(d/q)}{M^2-1}\,.\qedhere
\end{align*}
\end{proof}

\section{Proof of Lemma~\ref{lem:Delta}}\label{app:Delta}
\begin{proof}
Immediate from the law of total variance:
\begin{align*}
    \Delta & \equiv \min_k\min_{\substack{\anc\subseteq\nd(k) \\  \pa(k) \setminus \anc \ne \emptyset \\ \anc\subseteq \nd(\pa(k) \setminus \anc)}}\E_\anc\var(X_k\given \anc) - \sigma^2 \\
    & = \min_k\min_{\substack{\anc\subseteq\nd(k) \\  \pa(k) \setminus \anc \ne \emptyset \\ \anc\subseteq \nd(\pa(k) \setminus \anc)}} \E_\anc \E_{\pa(k)\setminus \anc}\var(X_k\given \pa(k))\\
    & \ \ \ \ + \E_\anc \var_{\pa(k)\setminus \anc} \E(X_k\given \pa(k)) - \sigma^2 \\
    & = \min_k\min_{\substack{\anc\subseteq\nd(k) \\  \pa(k) \setminus \anc \ne \emptyset \\ \anc\subseteq \nd(\pa(k) \setminus \anc)}} \E_\anc\var_{\pa(k)\setminus \anc} \bigg[ \beta_{\pa(k)\setminus \anc}\T X_{\pa(k)\setminus \anc} \given \anc\bigg] \\
    & = \beta_{\min}^2 \sigma^2.
    \qedhere
\end{align*}
\end{proof}

\end{document}